\documentclass[11pt]{article}

\usepackage[margin=1.1in]{geometry}
\usepackage[utf8]{inputenc}
\usepackage[T1]{fontenc}
\usepackage{lmodern}
\usepackage{microtype}
\usepackage{amsmath,amssymb,amsthm,mathtools}
\usepackage{enumitem}
\usepackage[colorlinks=true,linkcolor=blue,citecolor=blue,urlcolor=blue]{hyperref}

\numberwithin{equation}{section}

\theoremstyle{plain}
\newtheorem{theorem}{Theorem}[section]
\newtheorem{corollary}[theorem]{Corollary}
\newtheorem{proposition}[theorem]{Proposition}
\newtheorem{lemma}[theorem]{Lemma}

\theoremstyle{definition}
\newtheorem{definition}[theorem]{Definition}

\theoremstyle{remark}

\newcommand{\R}{\mathbb{R}}

\newcommand{\diam}{\operatorname{diam}}
\newcommand{\osc}{\operatorname{osc}}
\newcommand{\sgn}{\operatorname{sign}}

\title{Hereditary Diameter Rigidity in Real $L_1$-Spaces}
\author{Rafael Chiclana%
	\thanks{Department of Mathematics, Michigan State University, United States.
		E-mail: \texttt{chiclan1@msu.edu}}}
\date{}

\begin{document}
\maketitle

\begin{abstract}
	We prove that hereditary diameter lower bounds are stable under convex
	combinations in real \(L_1\)-spaces over arbitrary measure spaces. More
	precisely, if every weakly open piece of each set has diameter at least
	\(\delta>0\), then every finite or countable convex combination has the
	corresponding hereditary lower bound \(\delta/4\). In the diameter-two case,
	the bound is preserved exactly. Consequently, for every topology containing
	the relative weak topology, the diameter two and strong diameter two
	properties are equivalent on bounded convex subsets of the unit ball, as
	are the convex point of continuity property and strong regularity.
\end{abstract}
\section{Introduction}

The diameter two properties of Banach spaces measure how large the natural
weak pieces of the unit ball are; see, for instance,
\cite{AbrahamsenLimaNygaard2013,BecerraLopezRueda2014}. More generally, let
\(D\) be a non-empty bounded convex subset of a Banach space and let
\(\tau\) be a topology on \(D\). We say that \(D\) has the
\emph{\(\tau\)-diameter two property} if every non-empty \(\tau\)-open
subset of \(D\) has diameter two, and that \(D\) has the
\emph{\(\tau\)-strong diameter two property} if every finite convex
combination of non-empty \(\tau\)-open subsets of \(D\) has diameter two.
When \(D=B_X\) and \(\tau\) is the relative weak topology, these are the
usual diameter two and strong diameter two properties of \(X\), and we omit
\(\tau\) from the terminology.

In general, the diameter two property does not imply the strong diameter two
property. For \(L_1(\mu)\), however, the diameter two and strong diameter two
properties are equivalent for the usual weak topology. Indeed, the strong
diameter two property holds precisely when \(\mu\) has no atom of finite
measure \cite[Proposition~4.12]{MartinPerreauRuedaZoca2024}, while a
finite-measure atom yields slices of arbitrarily small diameter.

Let \(\tau\) be a topology on \(D\) containing its relative weak topology. For
\(C\subset D\), let \(\tau_C\) denote the topology induced by \(\tau\) on
\(C\). Following L\'opez-P\'erez and Medina
\cite{LopezPerezMedina2024}, we say that \(D\) has the
\emph{\(\tau\)-convex point of continuity property}, or
\(\tau\)-CPCP, if every non-empty bounded convex subset \(C\subset D\)
contains non-empty \(\tau_C\)-open subsets of arbitrarily small diameter.
We say that \(D\) is \emph{\(\tau\)-strongly regular}, or \(\tau\)-SR, if
every such \(C\) contains finite convex combinations of non-empty
\(\tau_C\)-open subsets having arbitrarily small diameter. Clearly,
\(\tau\)-CPCP implies \(\tau\)-SR, since a single open set may be regarded
as a convex combination with one term. We say that \(D\) is \emph{\(\tau\)-strongly regular for open subsets} if
every non-empty convex \(\tau\)-open subset \(O\subset D\) contains finite
convex combinations of non-empty \(\tau_O\)-open subsets having
arbitrarily small diameter. This property is formally weaker than
\(\tau\)-strong regularity.

L\'opez-P\'erez and Medina \cite{LopezPerezMedina2024} study these notions
for locally convex topologies containing the relative weak topology. In the
separable setting, and more generally under a countable-tightness
hypothesis, they show that the Krein--Milman property forces
\(\tau\)-CPCP and \(\tau\)-strong regularity to be equivalent; see
\cite[Theorem~2.6 and Corollary~2.9]{LopezPerezMedina2024}. This suggests
a possible approach to the problem of whether the Krein--Milman property
implies the Radon--Nikod\'ym property: construct a topology for which
strong regularity holds while CPCP fails.

As a first step, L\'opez-P\'erez and Medina construct a locally convex
topology \(\tau\) on \(B_{c_0}\), containing the relative weak topology,
such that \(B_{c_0}\) has the \(\tau\)-diameter two property and is
\(\tau\)-strongly regular for open subsets. Thus every non-empty
\(\tau\)-open subset has diameter two, while every non-empty convex \(\tau\)-open subset \(O\) contains finite convex combinations of non-empty \(\tau_O\)-open subsets of arbitrarily small diameter. In particular, \(B_{c_0}\) fails both \(\tau\)-CPCP and the
\(\tau\)-strong diameter two property. It is therefore natural to ask
whether this simultaneous behavior can occur on \(B_{L_1(\mu)}\).

In this note, we establish a hereditary diameter rigidity phenomenon for real
\(L_1\)-spaces over arbitrary measure spaces. As a consequence, the
topological behavior described above cannot occur on \(B_{L_1(\mu)}\).
More generally, for every topology \(\tau\) containing the relative weak
topology, the \(\tau\)-diameter two and \(\tau\)-strong diameter two
properties are equivalent, and the \(\tau\)-convex point of continuity
property is equivalent to \(\tau\)-strong regularity on bounded convex
subsets of \(B_{L_1(\mu)}\).

We introduce the following terminology. Let \(0<\delta\leq2\). A non-empty
set \(C\subset B_X\) is said to be \emph{weakly hereditarily diameter
	\(\delta\)}, or wHD-\(\delta\), if every non-empty weakly open piece of
\(C\) has diameter at least \(\delta\); see
Definition~\ref{def:wHD-delta}. In the endpoint case \(\delta=2\), we write
wHD2. For a finite or countable family of subsets \(C_i\subset B_X\) and
coefficients \(\lambda_i\geq0\) with \(\sum_i\lambda_i=1\), we write
\[
\sum_i\lambda_i C_i
=
\left\{
\sum_i\lambda_i c_i:c_i\in C_i
\right\}.
\]
In the countable case, the series is interpreted in the norm topology. Since
\(C_i\subset B_X\), every such series converges absolutely in norm.

Our first main result gives a quantitative hereditary lower bound for
countable convex combinations.

\begin{theorem}\label{thm:main-delta}
Let $(\Omega,\Sigma,\mu)$ be an arbitrary measure space and set
$X=L_1(\mu;\R)$. Let $0<\delta\leq 2$, let $(C_i)_{i=1}^\infty$ be non-empty wHD-$\delta$ subsets of $B_X$,
and let $\lambda_i\geq 0$ with
$\sum_{i=1}^\infty\lambda_i=1$. Then
\[\sum_{i=1}^\infty \lambda_i C_i\] 
is wHD-$(\delta/4)$.
\end{theorem}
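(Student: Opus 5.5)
The plan is to fix a non-empty relatively weakly open piece $W$ of $S=\sum_i\lambda_iC_i$ and a point $x=\sum_i\lambda_ic_i\in W$ with $c_i\in C_i$. We then produce a point $x'\in W$ with $\|x'-x\|\geq\delta/4-\eta$ for arbitrary $\eta>0$; this gives $\diam W\geq\delta/4$.

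\emph{Reduction to a finite problem.} Shrink $W$ to a basic neighbourhood
\[
\{z\in S:\ |f_k(z-x)|<\varepsilon,\ k=1,\dots,n\}
\]
with $f_k\in L_\infty(\mu)$ and $\|f_k\|_\infty\leq1$. Choose $N$ with $\sum_{i>N}\lambda_i<\eta$. We will only move $c_1,\dots,c_N$ and keep the tail fixed. Then it suffices to find $y_i\in C_i$ for $i\leq N$ such that
\[
|f_k(y_i-c_i)|<\varepsilon \quad\text{for all } k,
\qquad
\Bigl\|\sum_{i\leq N}\lambda_i(y_i-c_i)\Bigr\|\geq \tfrac{\delta}{4}\sum_{i\leq N}\lambda_i-\eta .
\]
Indeed, $f_k(x'-x)=\sum_{i\le N}\lambda_if_k(y_i-c_i)$ is then small, so $x'\in W$.

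\emph{Individual perturbations.} Since $C_i$ is wHD-$\delta$, the weak neighbourhood $V_i=\{y\in C_i:|f_k(y-c_i)|<\varepsilon\ \forall k\}$, possibly cut down by further functionals, has diameter at least $\delta$. By the triangle inequality it therefore contains $y$ with $\|y-c_i\|\geq\delta/2-\eta$. The only danger is cancellation among the differences $h_i=y_i-c_i$ inside the sum. We prevent it by choosing the $y_i$ inductively, using additional functionals at each step to exploit the lattice structure of $L_1$.

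\emph{Inductive step and the main obstacle.} Suppose $g=\sum_{j<i}\lambda_jh_j$ is already fixed. Pick $A$ of $\sigma$-finite support with $\int_{A^c}|g|$ tiny, and approximate $\sgn(g)\mathbf 1_A$ by a function $\varphi\in L_\infty$. Add $\varphi$ to the list of functionals defining $V_i$, so that $\int\varphi h$ is tiny for every $h=y-c_i$ with $y\in V_i$. The pointwise estimate $|a+b|\geq|a|+\sgn(a)b$ on $A$, together with $|a+b|\geq|b|-|a|$ off $A$, gives
\[
\|g+\lambda_ih\|\;\geq\;\|g\|+\lambda_i\int_{A^c}|h|-\text{(small)} .
\]
The key lemma we need is therefore: \emph{inside $V_i$ one can find $h$ whose mass off the fixed set $A$ is at least $\delta/4$, up to $\eta$.} This is where I expect the real difficulty, and where the factor $4$ rather than $2$ comes from.

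My first attempt at the lemma is as follows. Take $y,z\in V_i$ with $\|y-z\|\geq\delta-\eta$. Split $A$ into finitely many pieces on which $c_i$ is nearly constant, and also add the functionals $\mathbf 1_{A_m}$ and $\sgn(c_i)\mathbf 1_{A_m}$ to the neighbourhood. If both $y-c_i$ and $z-c_i$ had mass less than $\delta/4$ off $A$, their mass on $A$ would have to exceed $\delta/4$ each. But the weak constraints on the pieces $A_m$ force the positive and negative parts of $y-c_i$ on $A$ to nearly balance against $c_i$. We will use this to show, via a Radon--Nikod\'ym/conditional-expectation comparison on the finitely many pieces, that $\int_A|y-z|\leq\delta/2+O(\eta)$. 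Hence $\int_{A^c}|y-z|\geq\delta/2-O(\eta)$, so one of $y-c_i$, $z-c_i$ carries mass at least $\delta/4$ off $A$.

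\emph{Conclusion.} Summing the inequality over $i\leq N$ telescopes to $\|x'-x\|\geq(\delta/4)(1-\eta)-O(\eta)$. Letting $\eta\to0$ gives $\diam W\geq\delta/4$, which is the wHD-$(\delta/4)$ property of $\sum_i\lambda_iC_i$.
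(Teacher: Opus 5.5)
Your finite reduction and the estimate $\|g+\lambda_ih\|\ge\|g\|+\lambda_i\int_{A^c}|h|-(\text{small})$ are fine. The gap is the key lemma you feed them into: it is false, not merely unproved, so the induction does not go through.

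Take $\mu$ Lebesgue measure on $[0,1]$, every $C_i=B_{L_1}$ (which is wHD2, so take $\delta=2$), every $c_i=\mathbf 1$, and let $r_m$ be the Rademacher functions. Since $r_m\to0$ weakly and $\|\mathbf 1\pm r_m\|_1=1$, the points $\mathbf 1\pm r_m$ lie in any prescribed weak neighbourhood of $\mathbf 1$ in $B_{L_1}$ for $m$ large. This holds whichever finitely many functionals ($f_k$, $\varphi$, $\mathbf 1_{A_m}$, $\sgn(c_i)\mathbf 1_{A_m}$) you use. In particular $y_1=\mathbf 1+r_m$ is an admissible first choice, giving $g=\lambda_1r_m$. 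Since $|g|=\lambda_1$ everywhere, any $A$ with $\int_{A^c}|g|$ tiny has $\mu(A^c)$ tiny. Your own constraints $|\int_{A_m}(y-\mathbf 1)\,d\mu|<\varepsilon$ together with $\|y\|_1\le1$ then give $\int_A|y|\ge\mu(A)-O(\varepsilon)$. Hence $\int_{A^c}|y-\mathbf 1|\le2\mu(A^c)+O(\varepsilon)$ for every admissible $y$, so no $h$ has mass $\delta/4=1/2$ off $A$.

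The inequality $\int_A|y-z|\le\delta/2+O(\eta)$ on which your sketch rests fails in the same example: with $y,z=\mathbf 1\pm r_{m'}$ one gets $\int_A|y-z|=2\mu(A)\approx2$. Constraints on finitely many pieces control averages, not oscillation inside the pieces. The root problem is the fixed centre. The increments $y-c_i$ cannot in general be pushed off the support of what has already been built, and in a finite measure space there is simply no room off $A$.

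What is missing is a localization statement for \emph{pairs}. For $\alpha<\delta/4$, every $w\in L_1(\mu)$ and every $\eta>0$, a wHD-$\delta$ set (hence each weak piece $C_i\cap V_i$) should contain $y,z$ and a set $E$ with $\int_E|w|\,d\mu<\eta$ and $\int_E|y-z|\,d\mu>\alpha$. The paper proves this by contradiction. If it failed, truncation and Dunford--Pettis would place $C$ within $\alpha$ of a weakly compact set. The quantitative dentability estimates $\operatorname{Dent}\le\gamma\le2\omega$ would then give a slice of diameter $<4\alpha+\varepsilon<\delta$. This, not a halving of $\delta/2$, is where the $4$ comes from. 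The paper then turns localization into weak$^*$ density of $G(C,\alpha)$, gets a common functional by Baire, and concludes with the diameter formula.

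Your inductive scheme can be rescued with the same input if you build two points instead of one point far from a fixed centre; the latter is more than the theorem asserts anyway. Set $g=\sum_{j<i}\lambda_j(y_j-z_j)$ and localize $d=y_i-z_i$ on a set $E$ with $\int_E|g|$ tiny. Then use $\tfrac12\bigl(\|g+\lambda_id\|+\|g-\lambda_id\|\bigr)=\int\max(|g|,\lambda_i|d|)\,d\mu\ge\|g\|+\lambda_i\int_E|d|-\int_E|g|$ to decide the order of $y_i,z_i$.

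A minor point: for an arbitrary measure space $J(L_\infty(\mu))$ need not be all of $X^*$. Basic weak neighbourhoods should therefore be taken with general $x^*\in X^*$ rather than $f_k\in L_\infty(\mu)$.
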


The preceding theorem provides the universal constant \(1/4\), which is not
asserted to be optimal. At the endpoint \(\delta=2\), however, a sharper
argument gives the optimal result: diameter two is preserved exactly.

\begin{theorem}\label{thm:main-2}
	Let $(\Omega,\Sigma,\mu)$ be an arbitrary measure space and set
	$X=L_1(\mu;\R)$. Let $(C_i)_{i=1}^\infty$ be non-empty wHD2 subsets of $B_X$,
	and let $\lambda_i\geq 0$ with
	$\sum_{i=1}^\infty\lambda_i=1$. Then
	\[\sum_{i=1}^\infty \lambda_i C_i\] 
	is wHD2. In particular, it has diameter $2$.
\end{theorem}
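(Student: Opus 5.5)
The plan is to work directly with a basic weak neighbourhood. Fix \(\varepsilon>0\), a point \(z=\sum_i\lambda_i c_i\) with \(c_i\in C_i\), and a relatively weakly open set \(W\ni z\) of \(S:=\sum_i\lambda_iC_i\). It suffices to treat
\[
W=\{w\in S:\ |f_k(w-z)|<\eta,\ k=1,\dots,n\}
\]
with \(f_k\in X^*\), \(\|f_k\|\le1\) and \(\eta>0\). The goal is to find \(u,v\in W\) with \(\|u-v\|>2-\varepsilon\).

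\textbf{Reduction to finitely many summands.} Choose \(N\) with \(\sum_{i>N}\lambda_i<\varepsilon/8\). I keep \(c_i\) unchanged for \(i>N\). For \(i\le N\) I only move \(c_i\) inside
\[
U_i=\{x\in C_i: |f_k(x-c_i)|<\eta,\ k\le n\},
\]
so any choice \(x_i,y_i\in U_i\) gives
\[
u=\sum_{i\le N}\lambda_ix_i+\sum_{i>N}\lambda_ic_i\in W,\qquad v=\sum_{i\le N}\lambda_iy_i+\sum_{i>N}\lambda_ic_i\in W,
\]
and \(u-v=\sum_{i\le N}\lambda_i d_i\) with \(d_i=x_i-y_i\). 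By wHD2 each \(U_i\), and every weakly open subset of it, contains pairs with \(\|d_i\|\) arbitrarily close to \(2\). The whole difficulty is that the \(d_i\) may cancel against each other. I would use the \(L_1\) identity \(|a+b|=|a|+|b|\) iff \(ab\ge0\). Hence \(\|D+\lambda d\|\approx\|D\|+\lambda\|d\|\) holds as soon as \(\int_{\{D\neq0\}}|d|\,d\mu\) is small. Every element of \(L_1(\mu)\) has \(\sigma\)-finite support, so each \(D\) lives on a \(\sigma\)-finite set \(F\). For each fixed \(F\) one may then replace \(\mu\) by \(\mu|_F\) plus the complement, and the dual of \(L_1(\mu|_F)\) is \(L_\infty(\mu|_F)\); this is how I would handle arbitrary measure spaces.

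\textbf{Sequential alignment.} The pairs are chosen inductively for \(i=1,\dots,N\). Set \(D_{i-1}=\sum_{j<i}\lambda_jd_j\), which is supported on a \(\sigma\)-finite set \(F_{i-1}\) and satisfies \(\|D_{i-1}\|\approx2\sum_{j<i}\lambda_j\). I want \(x_i,y_i\in U_i\) with \(\|x_i-y_i\|>2-\varepsilon'\) and \(\int_{F_{i-1}}|x_i-y_i|\,d\mu<\varepsilon'\). Summing the resulting near-equalities then gives \(\|u-v\|\ge 2\sum_{i\le N}\lambda_i-O(\varepsilon')>2-\varepsilon\).

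To produce such a pair, I would take a net of pairs \((x^\alpha,y^\alpha)\) in shrinking weak neighbourhoods of a fixed point of \(U_i\), using wHD2 at each stage, so that \(d^\alpha=x^\alpha-y^\alpha\to0\) weakly with \(\|d^\alpha\|\to2\). Since \(\|x^\alpha\|,\|y^\alpha\|\le1\), the condition \(\|d^\alpha\|\approx2\) forces \(\|x^\alpha\|\approx\|y^\alpha\|\approx1\) and \(x^\alpha y^\alpha\le0\) up to small mass. I would then restrict to \(F_{i-1}\), which is finite-measure after renormalizing by a positive density, and apply Kadec--Pe\l czy\'nski splitting to \(d^\alpha 1_{F_{i-1}}\): a uniformly integrable part plus a part with shrinking support. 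The weak nullity of \(d^\alpha\) against the functionals \(\sgn(D_{i-1})1_{F_{i-1}}\) and \(1_{F_{i-1}}\cdot\sgn(\text{positive/negative parts})\) should be added to the neighbourhood, which is allowed since these are bounded functionals. The intended effect is to kill the uniformly integrable part's contribution on the set where it would cancel with \(D_{i-1}\). Any mass left on small-measure sets inside \(F_{i-1}\) only overlaps a portion of \(D_{i-1}\) of small measure, and that portion has small \(|D_{i-1}|\)-mass by absolute continuity of the integral. A finer estimate on that overlap should therefore also yield \(\|D_{i-1}+\lambda_id_i\|\approx\|D_{i-1}\|+\lambda_i\|d_i\|\) directly. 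If necessary, I would also allow swapping \(x_i\leftrightarrow y_i\) to fix the sign on any residual overlap.

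\textbf{Main obstacle.} I expect the hard step to be this alignment. Weak closeness of \(x_i\) and \(y_i\) only says \(\int d_i h\approx0\) for finitely many bounded \(h\), whereas I need the genuinely nonlinear condition that \(|d_i|\) carries little mass where \(D_{i-1}\) lives, or at least has matching sign there. The first thing I would try is to combine the wHD2 hypothesis, applied hereditarily to ever smaller weak neighbourhoods, with the subsequence-splitting argument above restricted to \(F_{i-1}\). This should show that norm-\(2\) differences that are weakly null must escape, in \(L_1\)-mass, from any fixed \(\sigma\)-finite region up to a uniformly integrable error that the added functionals control. Once that lemma is in place, the remaining estimates are routine bookkeeping with \(\varepsilon'\ll\varepsilon/N\). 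The final assertion follows by taking \(W=S\), which gives \(\diam S=2\).
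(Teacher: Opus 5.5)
Your reduction is sound: freeze $c_i$ for $i>N$, move only inside the weak pieces $U_i$, and choose the $d_i$ so that they do not cancel. The bookkeeping also closes once you have the right pair. If $\|d\|_1\le 2$, $\int_E|d|\,d\mu>\alpha$ and $\int_E|D|\,d\mu<\eta$, then splitting over $E$ and $E^c$ gives $\|D+\lambda d\|_1\ge\|D\|_1+\lambda(2\alpha-2)-2\eta$. Your first target, $\int_{F_{i-1}}|d_i|\,d\mu$ small on the whole support of $D_{i-1}$, is unattainable when $D_{i-1}$ has full support; you only need the set carrying $d_i$ to have small $|D_{i-1}|$-mass.

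\textbf{The gap is the alignment step, and the mechanism you propose for it fails.} Finitely many linear constraints on a weakly null difference cannot control its uniformly integrable part, and that is exactly where cancellation happens. Take Lebesgue measure on $[0,1]$, $C_1=C_2=B_{L_1[0,1]}$ (which is wHD2), $\lambda_1=\lambda_2=\frac12$, $c_2=1$, and $x_1=1$, $y_1=-1$, so $D_1=1$. Let $r_n$ be the Rademacher functions.
\begin{itemize}
\item The functions $1\pm r_n\in B_{L_1[0,1]}$ converge weakly to $1$. So they eventually lie in any weak neighbourhood of $1$, including one enlarged by your functionals $\sgn(D_1)\mathbf 1_{F_1}$ and the like.
\item Their difference $2r_n$ has norm $2$ and is weakly null. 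It is uniformly integrable, so the Kadec--Pe\l czy\'nski splitting has no shrinking-support part. It also satisfies $\int 2r_n\sgn(D_1)\,d\mu=0$.
\item Yet $\|D_1\pm\frac12\cdot 2r_n\|_1=\|1\pm r_n\|_1=1$ rather than $2$, so swapping $x_i\leftrightarrow y_i$ does not help either.
\end{itemize}
Hence the lemma you hope for is false: it is not true that weakly null, norm-$2$ differences escape any fixed region up to an error the added functionals control. An arbitrary near-diametral pair taken from a small weak neighbourhood cannot be used.

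What is missing is a nonlinear way of \emph{selecting} the pair, and this is the heart of the paper's argument. Since $U_i$ is wHD2 (Lemma~\ref{lem:weak-piece-wHD-delta}) and the sets $\{g:\|g-f\|>2-\beta\}$ are weakly open, you can pick inductively $f_1,\dots,f_N\in U_i$ with pairwise distances $>2-\beta$ (Lemma~\ref{lem:diametral}). The identity $|a-b|=|a|+|b|-2\min(a^+,b^+)-2\min(a^-,b^-)$ shows that pairwise same-sign overlaps are below $\beta/2$. Take $H$ of finite measure that carries almost all of $|D_{i-1}|\,d\mu$ with bounded density. Counting on the sets $H\cap\{|f_j|>\theta\}$ then produces $j\ne k$ and $E$ with $\int_E|D_{i-1}|\,d\mu<\eta$ and $\int_E|f_j-f_k|\,d\mu>\alpha$ (Lemmas~\ref{lem:loc_1}--\ref{lem:loc_4} with $w=D_{i-1}$).

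With this in place your induction works and gives a finitary alternative to the paper's proof. The paper instead feeds the same localization into Lemma~\ref{lem:localization-to-density} to get weak-star density of functionals with large oscillation. It then chooses one functional good for all $C_i$ by Baire's theorem and concludes with the diameter formula of Lemma~\ref{lem:diameter-formula}.
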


The two main theorems have the following topological interpretation. They
show that, on the unit ball of a real \(L_1\)-space, lower diameter bounds
for open sets cannot disappear under convex combinations. Quantitatively,
a uniform lower bound \(\delta\) is preserved up to the universal factor
\(1/4\), while at the endpoint \(\delta=2\) no loss occurs.

\begin{corollary}\label{cor:topological-main}
	Let \((\Omega,\Sigma,\mu)\) be an arbitrary measure space and set
	\(X=L_1(\mu;\mathbb R)\). Let \(D\subset B_X\) be non-empty, and let
	\(\tau\) be a topology on \(D\) containing the relative weak topology.
	
	\begin{enumerate}[label=\textup{(\roman*)}]
		\item Let \(0<\delta\leq2\). If every non-empty \(\tau\)-open subset of
		\(D\) has diameter at least \(\delta\), then every finite or countable
		convex combination of non-empty \(\tau\)-open subsets of \(D\) is
		wHD-\((\delta/4)\).
		
		\item If every non-empty \(\tau\)-open subset of \(D\) has diameter two,
		then every finite or countable convex combination of non-empty
		\(\tau\)-open subsets of \(D\) is wHD2.
	\end{enumerate}
\end{corollary}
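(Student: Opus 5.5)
The corollary is a direct reduction to Theorems~\ref{thm:main-delta} and~\ref{thm:main-2}. The plan is to show that every non-empty \(\tau\)-open subset \(U\subset D\) is itself wHD-\(\delta\) (respectively wHD2) as a subset of \(B_X\). The theorems then apply verbatim to the family of open sets appearing in the convex combination.

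Let \(U\subset D\) be non-empty and \(\tau\)-open. A non-empty weakly open piece of \(U\) has the form \(U\cap W\), where \(W\) is weakly open in \(X\) and \(U\cap W\neq\emptyset\). Since \(\tau\) contains the relative weak topology of \(D\), the set \(W\cap D\) is \(\tau\)-open. Hence \(U\cap W=U\cap(W\cap D)\) is a non-empty \(\tau\)-open subset of \(D\). By hypothesis its diameter is at least \(\delta\) in case (i), and equals \(2\) in case (ii). This is exactly the statement that \(U\) is wHD-\(\delta\) (respectively wHD2) in the sense of Definition~\ref{def:wHD-delta}. We also have \(U\subset D\subset B_X\), as the theorems require.

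Now take a convex combination \(\sum_i\lambda_iU_i\) of non-empty \(\tau\)-open sets \(U_i\subset D\).

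\begin{itemize}
\item If the family is countable, we apply Theorem~\ref{thm:main-delta} in case (i) and Theorem~\ref{thm:main-2} in case (ii), with \(C_i=U_i\). This gives the conclusion directly.
\item If the family is finite, say \(U_1,\dots,U_n\), we extend it to a countable one by setting \(C_i=U_i\) for \(i\le n\), and \(C_i=U_1\) with \(\lambda_i=0\) for \(i>n\). Each \(C_i\) is non-empty and wHD-\(\delta\). For every choice \(c_i\in C_i\), the series \(\sum_i\lambda_ic_i\) equals the finite sum \(\sum_{i\le n}\lambda_ic_i\). So the two convex combinations are the same set, and the countable case applies.
\end{itemize}

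There is no real obstacle here. The only point that needs care is the translation between "weakly open piece of \(U\)" and "\(\tau\)-open subset of \(D\)": relative weak openness must be taken inside \(D\), and this is precisely what the inclusion of topologies guarantees.
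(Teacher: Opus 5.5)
Your proposal is correct and follows the paper's proof: you show that each non-empty \(\tau\)-open \(U\subset D\) is wHD-\(\delta\) (respectively wHD2) via \(U\cap W=U\cap(D\cap W)\), and then invoke Theorems~\ref{thm:main-delta} and~\ref{thm:main-2}. Your padding of finite families with zero coefficients only makes explicit a step the paper leaves implicit.
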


The preceding geometric statement yields the three topological consequences
that motivate this work.

\begin{corollary}\label{cor:topological-consequences}
	Let \((\Omega,\Sigma,\mu)\) be an arbitrary measure space, set
	\(X=L_1(\mu;\mathbb R)\), and let \(D\subset B_X\) be a non-empty
	convex set. Let \(\tau\) be a topology on \(D\) containing the relative weak
	topology.
	
	\begin{enumerate}[label=\textup{(\roman*)}]
		\item \(D\) has the \(\tau\)-diameter two property if and only if it has the
		\(\tau\)-strong diameter two property.
		
		\item \(D\) has the \(\tau\)-CPCP if and only if it is
		\(\tau\)-strongly regular.
		\item If there exists \(0<\delta\leq 2\) such that every non-empty
		\(\tau\)-open subset of \(D\) has diameter at least \(\delta\), then
		\(D\) is not \(\tau\)-strongly regular for open subsets.
	\end{enumerate}
\end{corollary}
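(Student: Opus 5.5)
We derive Corollary~\ref{cor:topological-consequences} from Corollary~\ref{cor:topological-main}, taking $D$ convex and $\tau$ containing the relative weak topology.

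\emph{Part (i).} The implication from the $\tau$-strong diameter two property to the $\tau$-diameter two property is immediate, since a single open set is a convex combination with one term. For the converse, suppose every non-empty $\tau$-open subset of $D$ has diameter two, and let $\sum_{i=1}^n\lambda_i U_i$ be a finite convex combination of non-empty $\tau$-open sets. By Corollary~\ref{cor:topological-main}(ii) this set is wHD2. Taking the whole set as its own (trivially weakly open) piece, its diameter is two.

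\emph{Part (ii).} The implication from $\tau$-CPCP to $\tau$-SR was noted in the introduction. For the converse we argue by contraposition. Suppose $D$ fails $\tau$-CPCP. Then there are a non-empty bounded convex $C\subset D$ and $\delta>0$ such that every non-empty $\tau_C$-open subset of $C$ has diameter at least $\delta$; we may assume $\delta\le 2$. Apply Corollary~\ref{cor:topological-main}(i) with $D$ replaced by $C$ and $\tau$ replaced by $\tau_C$. This is allowed because $C\subset B_X$, and $\tau_C$ contains the relative weak topology of $C$, since the relative weak topology of $C$ is the restriction of that of $D$. It follows that every finite convex combination of non-empty $\tau_C$-open subsets of $C$ is wHD-$(\delta/4)$, and in particular has diameter at least $\delta/4$. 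Hence $C$ admits no such combination of arbitrarily small diameter, and $D$ is not $\tau$-SR.

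\emph{Part (iii).} Take $O=D$, which is non-empty, convex and $\tau$-open, and note that $\tau_O=\tau$. By Corollary~\ref{cor:topological-main}(i), every finite convex combination of non-empty $\tau$-open subsets of $D$ has diameter at least $\delta/4>0$. So $D$ is not $\tau$-strongly regular for open subsets.

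The only point needing care is the transfer to the subspace in part (ii). We need $\tau_C$ to contain the relative weak topology of $C$, and this holds because relative topologies are compatible with restriction. We also need to justify assuming $\delta\le2$: since every set has diameter at most $2$ in $B_X$, the failure of CPCP yields some $0<\delta\le2$. There are no other difficulties.
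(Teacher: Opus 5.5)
Your proposal is correct and takes the same route as the paper. Part (i) uses Corollary~\ref{cor:topological-main}(ii). Part (ii) applies Corollary~\ref{cor:topological-main}(i) to the witness $C$ of the failure of CPCP with the topology $\tau_C$, and part (iii) applies it to $O=D$. Your explicit checks that $\delta$ may be taken $\le 2$ and that $\tau_C$ contains the relative weak topology of $C$ are exactly the small points the paper's argument relies on.
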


Thus, on bounded convex subsets of a real \(L_1\)-space, no topology
containing the relative weak topology can separate the diameter two property
from the strong diameter two property, or the convex point of continuity
property from strong regularity. Moreover, a uniform positive lower bound
on the diameters of open sets is incompatible with strong regularity for
open subsets. No local convexity assumption on
\(\tau\) is needed.

The rest of the paper is organized as follows.
Section~\ref{sec:2} collects the notation and preliminary facts.
Section~\ref{sec:roadmap} presents a common proof scheme and proves the main
results, assuming two auxiliary weak-star density propositions.
Section~\ref{sec:auxiliary} establishes these propositions through two
\(L_1\)-localization arguments.

\section{Preliminaries}\label{sec:2}

Throughout the paper, all Banach spaces are over the real scalar field. Let $(\Omega,\Sigma,\mu)$ be an arbitrary measure space and set
$X=L_1(\mu;\mathbb{R})$. We write $B_X$ for the closed unit ball of $X$ and
$B_{L_\infty(\mu)}$ for the closed unit ball of $L_\infty(\mu;\R)$. For convenience, we write simply $L_1(\mu)$ and $L_\infty(\mu)$. For $\varphi\in L_\infty(\mu)$ and $f\in L_1(\mu)$ we write
\[
        \varphi(f)=\int_\Omega \varphi f\,d\mu.
\]
This gives a bounded linear functional on $X$ of norm at most
$\|\varphi\|_\infty$, yielding a canonical contractive linear operator
\[
        J:L_\infty(\mu)\longrightarrow X^*,\qquad J\varphi(f)=\int_\Omega
        \varphi f\,d\mu.
\]

\begin{lemma}\label{lem:Linfty-wstar-dense}
The set \(J(B_{L_\infty(\mu)})\) is weak-star dense in \(B_{X^*}\).
\end{lemma}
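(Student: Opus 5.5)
The plan is to argue by Hahn--Banach separation in the dual pair \((X^*, X)\) equipped with the weak-star topology. The set \(K=J(B_{L_\infty(\mu)})\) is convex, since \(J\) is linear and \(B_{L_\infty(\mu)}\) is convex. It is contained in \(B_{X^*}\), because \(J\) is contractive. Let \(\overline{K}^{w^*}\) denote its weak-star closure; since \(B_{X^*}\) is weak-star closed, \(\overline{K}^{w^*}\subset B_{X^*}\), and it is a weak-star closed convex set. Suppose, to reach a contradiction, that some \(x^*\in B_{X^*}\) lies outside \(\overline{K}^{w^*}\). The weak-star continuous linear functionals on \(X^*\) are exactly the evaluations at elements of \(X\). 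The separation theorem in the locally convex space \((X^*,w^*)\) therefore gives \(f\in X\) and \(\alpha\in\R\) with
\[
\sup_{\varphi\in B_{L_\infty(\mu)}}\int_\Omega \varphi f\,d\mu\;\le\;\alpha\;<\;x^*(f).
\]

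The key step is to show that the left-hand supremum equals \(\|f\|_1\). I would take \(\varphi=\sgn(f)\), defined as \(1\) where \(f>0\), \(-1\) where \(f<0\), and \(0\) elsewhere. This function is measurable and bounded by \(1\), so it lies in \(B_{L_\infty(\mu)}\), and it gives \(\int \varphi f\,d\mu=\int|f|\,d\mu=\|f\|_1\). Note that this works for an arbitrary measure space and needs no \(\sigma\)-finiteness: we never require \(J\) to be surjective or isometric, only that \(J\) norms each individual \(f\).

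Combining the two facts gives \(\|f\|_1\le\alpha<x^*(f)\le\|x^*\|\,\|f\|_1\le\|f\|_1\), which is a contradiction. Hence \(B_{X^*}\subset\overline{K}^{w^*}\), and the reverse inclusion was shown above, so the two sets are equal.

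There is no real obstacle. The only point needing care is the use of the correct form of the separation theorem: a point strictly separated from a closed convex set in a locally convex space. Equivalently, the statement is the bipolar theorem, using the fact that the prepolar of \(K\) in \(X\) is \(B_X\), as the norming computation shows.
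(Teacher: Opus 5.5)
Your proof is correct and follows essentially the paper's approach. The paper observes that \(J(B_{L_\infty(\mu)})\) is norming for \(X\) via \(\varphi=\sgn(f)\), so its polar in \(X\) is \(B_X\), and then invokes the bipolar theorem; your explicit Hahn--Banach separation in \((X^*,w^*)\) is that same argument unpacked.
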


\begin{proof}
For every $f\in X$,
\[
        \|f\|_1=\sup_{\varphi\in B_{L_\infty(\mu)}} \left|\int_\Omega
        \varphi f\,d\mu\right|,
\]
by taking $\varphi=\sgn(f)$, with $\sgn(0)=0$. Hence $J(B_{L_\infty(\mu)})$ is
norming for $X$. It is convex and balanced. Its polar in $X$ is therefore
$B_X$, and the bipolar theorem gives
\[
        \overline{J(B_{L_\infty(\mu)})}^{\omega^*}=B_{X^*}.\qedhere
\]
\end{proof}
For a non-empty bounded subset \(C\) of a Banach space \(Y\), we write
\[
\diam(C):=\sup\{\|x-y\|:x,y\in C\}.
\]

\begin{definition}\label{def:wHD-delta}
Let $Y$ be a Banach space, let $C\subset B_Y$ be non-empty, and let $0<\delta\leq 2$. We say
that $C$ is \emph{weakly hereditarily diameter $\delta$}, or wHD-$\delta$, if for every weakly open set $W\subset Y$ such that $C\cap W\ne\emptyset$, one
has
\[
        \diam(C\cap W)\geq \delta.
\]
In the case $\delta=2$, we say $C$ is wHD2.
\end{definition}

The following simple observation shows that the class of weakly hereditarily
diameter $\delta$ sets is stable under intersections with weakly open sets.

\begin{lemma}\label{lem:weak-piece-wHD-delta}
	Let \(Y\) be a Banach space and $0<\delta\leq 2$. If \(C\subset B_Y\) is wHD-$\delta$ and
	\(V\subset Y\) is weakly open with \(C\cap V\ne\emptyset\), then
	\(C\cap V\) is wHD-$\delta$.
\end{lemma}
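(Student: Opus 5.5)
The plan is to check Definition~\ref{def:wHD-delta} directly for the set \(C\cap V\). So I fix an arbitrary weakly open set \(W\subset Y\) with \((C\cap V)\cap W\neq\emptyset\), and the goal is to show that \(\diam((C\cap V)\cap W)\geq\delta\).

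First, \(C\cap V\) is a non-empty subset of \(B_Y\), because \(C\subset B_Y\) and \(C\cap V\neq\emptyset\) by hypothesis. This means \(C\cap V\) is an admissible set in Definition~\ref{def:wHD-delta}.

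The key step is the identity \((C\cap V)\cap W=C\cap(V\cap W)\). The set \(V\cap W\) is weakly open, since it is a finite intersection of weakly open sets. It also meets \(C\), because \(C\cap(V\cap W)=(C\cap V)\cap W\neq\emptyset\). Since \(C\) is wHD-\(\delta\), applying the definition to the weakly open set \(V\cap W\) gives
\[
\diam\bigl((C\cap V)\cap W\bigr)=\diam\bigl(C\cap(V\cap W)\bigr)\geq\delta .
\]
As \(W\) was arbitrary, \(C\cap V\) is wHD-\(\delta\).

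I expect no real obstacle in this argument. The only point that needs care is confirming that the intersection \(V\cap W\) is non-empty on \(C\) and remains weakly open; both facts follow immediately from the definitions.
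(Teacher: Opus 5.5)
Your proof is correct and follows the paper's argument exactly: fix a weakly open \(W\) meeting \(C\cap V\), observe that \(V\cap W\) is weakly open and meets \(C\), and apply the wHD-\(\delta\) property of \(C\) to \(C\cap(V\cap W)\). The extra check that \(C\cap V\) is a non-empty subset of \(B_Y\) is a point the paper leaves implicit.
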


\begin{proof}
	Let \(W\subset Y\) be weakly open and suppose
	\((C\cap V)\cap W\ne\emptyset\). Then \(V\cap W\) is weakly open in \(Y\)
	and meets \(C\). Since \(C\) is wHD-$\delta$,
	\[
	\diam((C\cap V)\cap W)=\diam\big(C\cap(V\cap W)\big)\geq \delta.\qedhere
	\]
\end{proof}

Let $C\subset B_X$. We define the \emph{oscillation} of a functional $\varphi\in B_{X^*}$ on
$C$ by
\[
        \osc_C(\varphi)
        =\sup\{\varphi(f-g):f,g\in C\}.
\]

\section{Roadmap and proof of the main results}\label{sec:roadmap}

The proofs of Theorems~\ref{thm:main-delta} and~\ref{thm:main-2} follow the same three-step approach. First, a diameter formula
reduces the diameter of a countable convex combination to a weighted sum of
oscillations. Second, we prove two weak-star density results for functionals with large
oscillation: a quantitative result for wHD-\(\delta\) sets and a sharper
endpoint result for wHD2 sets. Finally, Baire's theorem gives one functional with
simultaneously large oscillation on countably many sets.

\begin{lemma}\label{lem:diameter-formula}
Let $Y$ be a Banach space. Let $(C_i)_{i=1}^\infty$ be non-empty subsets
of $B_Y$, and let $\lambda_i\geq 0$ with
$\sum_{i=1}^\infty\lambda_i=1$. Then
\[
        \diam \left ( \sum_{i=1}^\infty \lambda_i C_i \right )
        =\sup_{\psi\in B_{Y^*}}
          \sum_{i=1}^\infty \lambda_i\osc_{C_i}(\psi).
\]
\end{lemma}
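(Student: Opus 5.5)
Write \(S=\sum_{i}\lambda_iC_i\). The plan is to compute \(\diam(S)\) through norming functionals, using the Hahn--Banach identity \(\|z\|=\sup_{\psi\in B_{Y^*}}\psi(z)\), and then interchange the suprema.

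\emph{Upper bound.} Take \(x=\sum_i\lambda_i f_i\) and \(y=\sum_i\lambda_i g_i\) in \(S\), with \(f_i,g_i\in C_i\). Both series converge in norm, since \(\|f_i\|,\|g_i\|\leq1\) and \(\sum_i\lambda_i=1\). Hence for every \(\psi\in B_{Y^*}\), continuity and linearity of \(\psi\) give
\[
\psi(x-y)=\sum_{i=1}^\infty\lambda_i\psi(f_i-g_i)\leq\sum_{i=1}^\infty\lambda_i\osc_{C_i}(\psi).
\]
Each oscillation lies in \([0,2]\), so the right-hand series converges. Taking the supremum over \(\psi\) gives \(\|x-y\|\leq\sup_\psi\sum_i\lambda_i\osc_{C_i}(\psi)\). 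Then taking the supremum over \(x,y\in S\) gives the inequality \(\leq\).

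\emph{Lower bound.} Fix \(\psi\in B_{Y^*}\) and \(\varepsilon>0\).

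1. Choose \(N\) with \(\sum_{i>N}\lambda_i\leq\varepsilon\). Since every oscillation is at most \(2\), the tail \(\sum_{i>N}\lambda_i\osc_{C_i}(\psi)\) is at most \(2\varepsilon\).
2. For each \(i\leq N\), choose \(f_i,g_i\in C_i\) with \(\psi(f_i-g_i)\geq\osc_{C_i}(\psi)-\varepsilon\). This is possible because each oscillation is a finite supremum, the sets \(C_i\) being non-empty and bounded.
3. For \(i>N\), choose any \(f_i=g_i\in C_i\). This uses non-emptiness.

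Then \(x=\sum_i\lambda_if_i\) and \(y=\sum_i\lambda_ig_i\) lie in \(S\), and
\[
\diam(S)\geq\|x-y\|\geq\psi(x-y)=\sum_{i\leq N}\lambda_i\psi(f_i-g_i)\geq\sum_{i=1}^\infty\lambda_i\osc_{C_i}(\psi)-3\varepsilon.
\]
Letting \(\varepsilon\to0\) and taking the supremum over \(\psi\) gives \(\geq\).

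The argument is routine. The only point needing care is the countable case: we must justify exchanging \(\psi\) with the infinite series, which holds by norm convergence, and handle the tail terms uniformly, which the bound \(\osc\leq2\) takes care of. I expect no real obstacle beyond this bookkeeping.
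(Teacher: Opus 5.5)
Your proposal is correct and follows essentially the same route as the paper: you express the norm through norming functionals in \(B_{Y^*}\), exchange the suprema, and prove the lower bound by choosing near-optimal pairs for \(i\leq N\) and setting \(f_i=g_i\) in the tail. The only cosmetic difference is that you fix \(N\) in advance via the tail bound \(\sum_{i>N}\lambda_i\leq\varepsilon\) and \(\osc\leq 2\), while the paper sends \(\varepsilon\to0\) first and then lets \(n\to\infty\).
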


\begin{proof}
	For every choice \(u_i,v_i\in C_i\), the series $\sum_{i=1}^\infty \lambda_i(u_i-v_i)$ converges in norm. Hence
	\[
	\begin{aligned}
		\diam\left (\sum_{i=1}^\infty \lambda_i C_i\right )
		&=
		\sup_{u_i,v_i\in C_i}
		\left\|
		\sum_{i=1}^\infty \lambda_i(u_i-v_i)
		\right\|  \\
		&=
		\sup_{u_i,v_i\in C_i}
		\sup_{\psi\in B_{Y^*}}
		\psi\left(\sum_{i=1}^\infty \lambda_i(u_i-v_i)\right) \\
		&=
		\sup_{\psi\in B_{Y^*}}
		\sup_{u_i,v_i\in C_i}
		\sum_{i=1}^\infty \lambda_i\psi(u_i-v_i) \\
		&=
		\sup_{\psi\in B_{Y^*}}
		\sum_{i=1}^\infty
		\lambda_i
		\sup_{u_i,v_i\in C_i}\psi(u_i-v_i) \\
		&=
		\sup_{\psi\in B_{Y^*}}
		\sum_{i=1}^\infty \lambda_i\osc_{C_i}(\psi).
	\end{aligned}
	\]
	To justify the fourth equality, fix \(\psi\in B_{Y^*}\) and write $a_i=\osc_{C_i}(\psi)$. For every choice \(u_i,v_i\in C_i\),
	\[
	\sum_{i=1}^{\infty}\lambda_i\psi(u_i-v_i)
	\leq
	\sum_{i=1}^{\infty}\lambda_i a_i.
	\]
	Conversely, fix \(n\in\mathbb N\) and \(\varepsilon>0\). For
	\(1\leq i\leq n\), choose \(u_i,v_i\in C_i\) so that
	\[
	\psi(u_i-v_i)>a_i-\varepsilon.
	\]
	For \(i>n\), choose \(u_i=v_i\in C_i\). Then
	\[
	\sup_{u_i,v_i\in C_i}
	\sum_{i=1}^{\infty}\lambda_i\psi(u_i-v_i)
	\geq
	\sum_{i=1}^{n}\lambda_i a_i-\varepsilon.
	\]
	Letting \(\varepsilon\downarrow0\) and then \(n\to\infty\) gives the reverse
	inequality, since \(0\leq a_i\leq2\) and \(\sum_i\lambda_i=1\).
\end{proof}

In view of Lemma~\ref{lem:diameter-formula}, to lower bound the diameter of convex
combinations, it is enough to find a single functional which has large
oscillation on each of the sets involved. For \(C\subset B_X\) and \(0<\alpha<2\),
write
\[
G(C,\alpha)=\{x^*\in B_{X^*}:\osc_C(x^*)>\alpha\}.
\]
The following two density results are the key inputs. The first gives the
positive-diameter estimate needed for the wHD-\(\delta\) theorem, while the
second gives the sharp endpoint estimate in the case \(\delta=2\). Both are
proved in Section~\ref{sec:auxiliary}.

\begin{proposition}\label{prop:G-delta-dense}
	Let \(0<\delta\leq2\), let \(C\subset B_X\) be wHD-\(\delta\), and let $0<\alpha<\delta/4$. Then \(G(C,\alpha)\) is weak-star open and weak-star dense in \(B_{X^*}\).
	\end{proposition}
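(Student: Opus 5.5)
The plan is to treat the two claims separately. Weak-star openness is formal; density is the substance.

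\emph{Openness.} For fixed \(f,g\in C\), the map \(x^*\mapsto x^*(f-g)\) is weak-star continuous. Hence \(\osc_C\) is a supremum of weak-star continuous functions, so it is weak-star lower semicontinuous on \(B_{X^*}\). It follows that \(G(C,\alpha)=\{\osc_C>\alpha\}\) is relatively weak-star open in \(B_{X^*}\).

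\emph{Reduction for density.} By Lemma~\ref{lem:Linfty-wstar-dense}, it suffices to approximate elements \(J\varphi\) with \(\varphi\in B_{L_\infty(\mu)}\). So fix \(\varphi\), functions \(h_1,\dots,h_n\in X\) and \(\varepsilon>0\). We look for \(\psi\in B_{L_\infty(\mu)}\) with
\[
\Bigl|\int(\psi-\varphi)h_k\,d\mu\Bigr|<\varepsilon \quad (1\le k\le n)
\qquad\text{and}\qquad \osc_C(\psi)>\alpha .
\]
Choose a set \(A\) of finite measure (possible since the \(h_k\) have \(\sigma\)-finite support) and a finite partition \(A=P_1\cup\dots\cup P_m\) such that each \(h_k\) is within \(\eta\) in \(L_1\) of a simple function \(s_k=\sum_j c_{kj}1_{P_j}\). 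Then the error \(\int(\psi-\varphi)h_k\) is controlled, up to \(O(\eta)\) and uniformly over \(\psi\in B_{L_\infty}\), by the cell integrals \(\int_{P_j}(\psi-\varphi)\). Nothing is required of \(\psi\) outside \(A\), beyond a smallness contribution.

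\emph{Localization.} Fix \(f_0\in C\) and form the weak neighbourhood
\[
W=\Bigl\{f:\ \Bigl|\int_{P_j}(f-f_0)\Bigr|<\eta\ \ \forall j,\ \ \Bigl|\int\varphi(f-f_0)\Bigr|<\eta\Bigr\}.
\]
Since \(C\) is wHD-\(\delta\), there are \(f,g\in C\cap W\) with \(\|f-g\|_1>\delta-\eta\). For \(d=f-g\) this gives \(|\int_{P_j}d|<2\eta\) on every cell and \(|\varphi(d)|<2\eta\). We then define
\[
\psi=\tfrac12(\varphi+\chi),
\]
where \(\chi\in B_{L_\infty}\) is built as follows. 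Off \(A\), set \(\chi=\sgn(d)\). On each cell \(P_j\), \(\chi\) should correlate with \(\sgn(d)\) while satisfying \(\int_{P_j}\chi\approx\int_{P_j}\varphi\). The intended construction is to take \(\chi=\sgn(d)\) on the part of \(P_j\) where \(d\) carries its mass, and \(\chi=\varphi\) on the rest. Because \(d\) has nearly zero integral on \(P_j\), its positive and negative parts have nearly equal mass there, which is what makes the balancing plausible. Then \(\psi(f-g)=\tfrac12\varphi(d)+\tfrac12\chi(d)\), and the aim is \(\chi(d)\gtrsim\tfrac12\|d\|_1\). That would give \(\osc_C(\psi)\ge\psi(f-g)\gtrsim\delta/4>\alpha\) once \(\eta\) is small. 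The averaging with \(\varphi\) and the balancing on cells are where the factor \(1/4\) is spent. Finally, \(\int_{P_j}(\psi-\varphi)=\tfrac12\int_{P_j}(\chi-\varphi)\approx0\) gives the required approximation on \(h_1,\dots,h_n\).

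\emph{Main obstacle.} The hard step is constructing \(\chi\) on each cell so that both conditions hold at once. In particular, when \(P_j\) is an atom of finite measure, \(\chi\) is a constant and cannot follow \(\sgn(d)\). My first attempt is to refine the partition so that atoms are separate cells. On an atom, \(|\int_{P_j}d|<2\eta\) already forces \(d\) to be small, so \(\chi=\varphi\) there costs nothing. On nonatomic cells, I would use Lyapunov-type splitting of \(\{d>0\}\) and \(\{d<0\}\) to adjust the cell integral exactly, at a loss of at most half of \(\int_{P_j}|d|\).
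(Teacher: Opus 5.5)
Your openness argument and the reduction to approximating $J\varphi$ on finitely many test functions are fine. The density argument, however, has a genuine gap at exactly the step you flagged, and the proposed fix does not work.

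\textbf{Where the cell construction fails.} The claim that on a non-atomic cell you can arrange $\int_{P_j}\chi\approx\int_{P_j}\varphi$ while losing at most half of $\int_{P_j}|d|$ is false whenever $|\varphi|=1$ on a large part of the cell. In that case $\int_{P_j}(\chi-\varphi)\approx0$ forces $\chi\approx\varphi$ in $L_1(P_j)$, so $\chi$ may deviate from $\varphi$ only on a set of small $\mu$-measure.

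Here is a concrete instance. Take Lebesgue measure on $[0,1]$, $C=B_{L_1[0,1]}$ (which is wHD2), $\varphi\equiv1$, $h_1=\mathbf 1_{[0,1]}$ with the trivial partition, $f_0=0$, and $f=r_n$, $g=-r_n$ (Rademacher functions). Then $f,g\in C\cap W$ and $\|f-g\|_1=2$. Yet for every $\chi\in B_{L_\infty}$,
\[
\int_0^1\chi r_n\,d\mu\leq\int_0^1(1-\chi)\,d\mu .
\]
Your approximation requirement forces $\int_0^1(1-\chi)<2\varepsilon$, and hence $\psi(f-g)=\tfrac12\chi(d)<2\varepsilon$. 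The loss is total, not half. Lyapunov splitting cannot repair this: it controls the $|d|$-mass of the pieces, whereas the constraint concerns their $\mu$-measure. Passing to $\|\varphi\|_\infty\leq1-\kappa$ only buys a gain of order $\kappa$.

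\textbf{What is actually missing.} An arbitrary far-apart pair in $C\cap W$ is the wrong object. You need the \emph{localization principle}: for every control function $w\in L_1(\mu)$ and every $\eta>0$, there exist $y,z\in C$ and a measurable $E$ with
\[
\int_E|w|\,d\mu<\eta \qquad\text{and}\qquad \int_E|y-z|\,d\mu>\alpha .
\]
Once you have this, no cells or balancing are needed. With $w=\sum_i|g_i|$, the functions $\varphi_\pm=\varphi\mathbf 1_{E^c}\pm\sgn(y-z)\mathbf 1_E$ stay within $2\int_E w\,d\mu$ of $\varphi$ on every test function. Moreover, $J\varphi_+(y-z)-J\varphi_-(y-z)=2\int_E|y-z|\,d\mu>2\alpha$, so one of $J\varphi_\pm$ lies in $G(C,\alpha)$.

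\textbf{How the paper proves the localization.} This is where wHD-$\delta$ is really used. It does not follow from the existence of a single pair at distance $>\delta-\eta$. The paper argues by contradiction:
\begin{enumerate}[label=\textup{(\arabic*)}]
\item If localization fails, truncation together with Dunford--Pettis places $C$ inside $K+\alpha B_X$ with $K$ weakly compact.
\item The quantitative dentability estimate then gives $\operatorname{Dent}(C)\leq\gamma(\operatorname{co}C)\leq2\omega(\operatorname{co}C)\leq2\alpha$.
\item This produces a slice of $C$ of diameter $<4\alpha+\varepsilon<\delta$, contradicting wHD-$\delta$.
\end{enumerate}
The factor $1/4$ comes from this step, not from averaging $\chi$ with $\varphi$ or from balancing on cells.
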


\begin{proposition}\label{prop:G-two-dense}
	Let \(C\subset B_X\) be wHD2 and let $0<\alpha<2$. Then \(G(C,\alpha)\) is
	weak-star open and weak-star dense in \(B_{X^*}\).
\end{proposition}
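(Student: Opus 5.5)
The plan has two parts: weak-star openness, which is formal, and weak-star density, which carries the content. For openness, note that for fixed \(f,g\in C\) the map \(x^*\mapsto x^*(f-g)\) is weak-star continuous. Hence \(\osc_C\) is a supremum of weak-star continuous functions, so it is weak-star lower semicontinuous on \(B_{X^*}\), and \(G(C,\alpha)=\{\osc_C>\alpha\}\) is relatively weak-star open. For density, Lemma~\ref{lem:Linfty-wstar-dense} reduces matters to weak-star neighbourhoods of points \(J\varphi\) with \(\varphi\in B_{L_\infty(\mu)}\). So I fix such \(\varphi\), finitely many \(h_1,\dots,h_n\in X\) and \(\varepsilon>0\). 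The goal is \(\psi\in B_{L_\infty(\mu)}\) with \(|\int(\psi-\varphi)h_k\,d\mu|<\varepsilon\) for all \(k\) and \(\osc_C(J\psi)>\alpha\).

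The localization step goes as follows. Since the \(h_k\) have \(\sigma\)-finite support, I choose a finite partition \(A_1,\dots,A_m\) of a finite-measure set \(A\), together with \(A_0=\Omega\setminus A\). It should satisfy three conditions: the \(h_k\) are \(\eta\)-close in \(L_1\) to functions that are constant on each \(A_j\) (\(j\ge1\)) and vanish on \(A_0\); \(\varphi\) is uniformly \(\eta\)-close on \(A\) to a simple function with values \(\varphi_j\) on \(A_j\); and \(\int_{A_0}|h_k|<\eta\). Any \(\psi\in B_{L_\infty}\) whose averages \(\mu(A_j)^{-1}\int_{A_j}\psi\) are within \(\eta\) of \(\varphi_j\) is then automatically \(\varepsilon\)-close to \(\varphi\) on every \(h_k\), with no constraint at all on \(A_0\). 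Next I pick \(c_0\in C\) and form the weakly open set
\[
W=\Bigl\{f:\Bigl|\int_{A_j}(f-c_0)\,d\mu\Bigr|<\eta,\ 0\le j\le m\Bigr\}.
\]
By wHD2 (and Lemma~\ref{lem:weak-piece-wHD-delta}) there are \(f,g\in C\cap W\) with \(\|f-g\|_1>2-\eta\). Since \(\|f\|_1,\|g\|_1\le1\), this forces \(\|f\|_1,\|g\|_1\approx1\), forces \(f,g\) to have almost opposite signs pointwise, and also gives \(\int_{A_j}(f-g)\approx0\) for each \(j\).

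The candidate functional is then built piece by piece. On \(A_0\) I set \(\psi=\sgn(f-g)\). On each \(A_j\) I want \(\psi\) to have average \(\approx\varphi_j\), \(|\psi|\le1\), and \(\int_{A_j}\psi(f-g)\approx\int_{A_j}|f-g|\). The main obstacle is exactly this. The naive choice \(\psi=\varphi_j+(1-|\varphi_j|)\sgn(f-g)\) only recovers the factor \(1-|\varphi_j|\), which degenerates when \(|\varphi|\) is close to \(1\). To use \(\psi=\sgn(f-g)\) on \(A_j\) one needs \(\mu(\{f>g\}\cap A_j)/\mu(A_j)\approx(1+\varphi_j)/2\), and that is a measure-balance condition which weak neighbourhoods do not control directly.

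My first attempt at overcoming this exploits the wHD2 hypothesis hereditarily at many scales. I refine each \(A_j\) into subsets \(A_{j,1},\dots,A_{j,r}\) of prescribed measure proportions. I then shrink \(W\) so that \(f-g\) has near-zero integral on every refined piece. Among the refined pieces I choose disjoint unions \(P_j\subset A_j\) of relative measure \(\approx(1+\varphi_j)/2\). Finally I set \(\psi=\sgn(f-g)\) off a small set and correct it by \(\pm1\) swaps on pieces where \(f-g\) carries little mass, using the near-zero integrals to control the averages. If this balancing fails in general, the fallback is to settle first for oscillation \(>2(1-\theta)\) on an \(L_\infty\)-neighbourhood of a point \((1-\theta)\varphi\) with \(\theta\) small. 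I would then iterate the construction countably many times on disjoint parts of \(A_0\) (which is unconstrained), pushing the oscillation above \(\alpha\) while keeping \(\psi\) weak-star close to \(\varphi\).
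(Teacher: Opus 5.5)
Your openness argument and the reduction to prescribing averages of \(\psi\) on a finite partition are fine. The density part, however, is not proved. The step you flag as the main obstacle is left open, and neither proposed remedy can close it, because the difficulty lies in the choice of the pair \(f,g\), not in the construction of \(\psi\).

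Take \(X=L_1[0,1]\), \(C=B_X\) (which is wHD2), \(\varphi\equiv1\), \(h_1\equiv1\) and \(c_0=0\). For large \(n\), the Rademacher pair \(f=r_n\), \(g=-r_n\) lies in your set \(W\) however finely you refine it. It satisfies \(\|f-g\|_1=2\) and \(\int_P(f-g)\,d\mu\approx0\) on every prescribed piece \(P\), so it is a legitimate output of your wHD2 step. Yet every \(\psi\in B_{L_\infty}\) with \(\int\psi\,d\mu>1-\varepsilon\) satisfies \(|\int\psi(f-g)\,d\mu|=|\int(1-\psi)(f-g)\,d\mu|\leq2\int(1-\psi)\,d\mu<2\varepsilon\). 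The remedies fail on this example as well:
\begin{itemize}
\item Since \(|f-g|\equiv2\), there are no pieces where \(f-g\) carries little mass on which to perform your \(\pm1\) swaps.
\item Here \(A_0=\emptyset\), so the fallback has no unconstrained region to work in.
\item Near \((1-\theta)\varphi\), the same computation gives oscillation of only about \(2\theta\) on this pair, not \(2(1-\theta)\).
\end{itemize}

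What is missing is a localization principle. You must select \(y,z\in C\) with \(\int_E|y-z|\,d\mu>\alpha\) for some set \(E\) satisfying \(\int_E w\,d\mu<\eta\), where \(w=\sum_k|h_k|\). A single almost-diametral pair from a weak neighbourhood does not provide this.

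The paper obtains \(E\) as follows.
\begin{itemize}
\item It uses wHD2 repeatedly to produce \(N\) points of \(C\) that are pairwise \((2-\beta)\)-separated, by induction using weak lower semicontinuity of the norm.
\item In \(B_{L_1}\), such points have small same-sign overlap: \(\int\min(f_j^+,f_k^+)\,d\mu+\int\min(f_j^-,f_k^-)\,d\mu<\beta/2\).
\item A counting argument on a finite-measure set \(H\) carrying most of the control measure (on which \(w\) is bounded) forces two of the sets \(A_j=H\cap\{|f_j|>\theta\}\) to have small measure. Then \(E=H^c\cup A_j\cup A_k\) works.
\end{itemize}
In a large pairwise almost-diametral family, most members must concentrate, so the spread-out pairs like \(\pm r_n\) that break your construction are avoided.

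Once \(E\) is available, no balancing is needed. Both \(\psi_\pm=\varphi\mathbf 1_{E^c}\pm\sgn(y-z)\mathbf 1_E\) lie in the neighbourhood, and \(J\psi_+(y-z)-J\psi_-(y-z)=2\int_E|y-z|\,d\mu>2\alpha\). The uncontrolled term \(\int_{E^c}\varphi(y-z)\,d\mu\) cancels, so one of \(J\psi_\pm\) lies in \(G(C,\alpha)\).
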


The Baire category theorem then allows us to choose one functional with
large oscillation on countably many wHD-$\delta$ sets simultaneously.

\begin{lemma}\label{lem:common-functional}
	Let \((C_i)_{i=1}^\infty\) be non-empty subsets of \(B_X\), and let
	\(0<\alpha<2\). Assume that \(G(C_i,\alpha)\) is weak-star open and
	weak-star dense in \(B_{X^*}\) for every \(i\geq1\). Then there exists
	\(x^*\in B_{X^*}\) such that
	\[
	\osc_{C_i}(x^*)>\alpha
	\quad \forall\, i\geq1.
	\]
\end{lemma}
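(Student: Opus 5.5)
The plan is to apply the Baire category theorem in a compact Hausdorff space. By Alaoglu's theorem, \(B_{X^*}\) with the weak-star topology is compact Hausdorff, so it is a Baire space: a countable intersection of open dense subsets is dense, and in particular non-empty. No metrizability is needed. This matters because \(X=L_1(\mu)\) need not be separable for an arbitrary measure space \(\mu\).

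The steps are as follows.

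First, I record that \(B_{X^*}\) is non-empty and weak-star compact Hausdorff by Banach--Alaoglu. Compact Hausdorff spaces satisfy the Baire category theorem. If one prefers a self-contained argument, it is the standard one: given open dense sets \(U_1,U_2,\dots\) and a non-empty open set \(V_0\), use regularity to choose non-empty open sets \(V_n\) with \(\overline{V_n}\subset V_{n-1}\cap U_n\). Compactness and the finite intersection property then give a point in \(\bigcap_n\overline{V_n}\subset\bigcap_n U_n\).

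Second, I apply this with \(U_i=G(C_i,\alpha)\). By hypothesis each \(U_i\) is weak-star open and weak-star dense in \(B_{X^*}\). Note that openness is meant relative to \(B_{X^*}\). Openness also follows directly, since \(\osc_{C_i}(x^*)=\sup_{f,g\in C_i}x^*(f-g)\) is a supremum of weak-star continuous functions and hence weak-star lower semicontinuous. Therefore \(\bigcap_i G(C_i,\alpha)\) is weak-star dense in \(B_{X^*}\), and in particular non-empty.

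Finally, any \(x^*\) in this intersection satisfies \(\osc_{C_i}(x^*)>\alpha\) for every \(i\geq1\), which is the claim.

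There is essentially no obstacle here. The only point requiring care is that Baire's theorem is used in its compact Hausdorff form rather than its complete metric form, because the weak-star topology on \(B_{X^*}\) need not be metrizable.
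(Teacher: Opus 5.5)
Your proof is correct and matches the paper's argument: the paper also uses that the weak-star compact ball \(B_{X^*}\) is a Baire space, so \(\bigcap_i G(C_i,\alpha)\) is dense and hence non-empty. Your remarks that metrizability is not needed and that \(\osc_{C_i}\) is weak-star lower semicontinuous are accurate, but the hypotheses already supply openness, so they are not required.
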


\begin{proof}
	Since \(B_{X^*}\) is weak-star compact, it is a Baire space. Therefore
	\[
	\bigcap_{i=1}^\infty G(C_i,\alpha)
	\]
	is dense in \(B_{X^*}\), and in particular non-empty. Any element of this
	intersection has the desired property.
\end{proof}

We are now ready to prove the main results.

\begin{proof}[Proof of Theorem~\ref{thm:main-delta}]
	For every choice $c_i\in C_i$, the series $\sum_i\lambda_ic_i$ converges in
	norm because
	\[
	\sum_{i=1}^\infty\lambda_i\|c_i\|
	\leq \sum_{i=1}^\infty\lambda_i=1.
	\]
	Thus
	\[
	S=\sum_{i=1}^\infty\lambda_iC_i
	\]
	is well-defined and contained in $B_X$. We first prove that $\diam(S)\geq \delta/4$. Let
	$0<\alpha<\delta/4$. By Proposition \ref{prop:G-delta-dense} and Lemma \ref{lem:common-functional}, there is $x^*\in B_{X^*}$ such that
	\[
	\osc_{C_i}(x^*)>\alpha
	\quad \forall \, i\geq 1.
	\]
	By Lemma~\ref{lem:diameter-formula},
	\[
	\diam(S)
	\geq \sum_{i=1}^\infty\lambda_i\osc_{C_i}(x^*)
	> \sum_{i=1}^\infty\lambda_i\alpha=\alpha.
	\]
	Since $0<\alpha<\delta/4$ was arbitrary, this gives $\diam(S)\geq\delta/4$.
	
	It remains to prove the hereditary statement. Let $W\subset X$ be weakly open with $S\cap W\ne\emptyset$. Choose
	\[
	s_0=\sum_{i=1}^\infty\lambda_i c_i\in S\cap W,
	\qquad c_i\in C_i.
	\]
	There are $x_1^*,\ldots,x_m^*\in X^*$ and $\eta>0$ such that the basic weak neighborhood
	\[
	V=\left\{x\in X:
	|x_\ell^*(x-s_0)|<\eta,
	\ 1\leq \ell\leq m\right\}
	\]
	satisfies $s_0\in V\subset W$. For each $i$, define
	\[
	V_i=\left\{x\in X:
	|x_\ell^*(x-c_i)|<\eta,
	\ 1\leq \ell\leq m\right\},
	\qquad D_i=C_i\cap V_i.
	\]
	Then $D_i$ is non-empty and wHD-$\delta$ by Lemma~\ref{lem:weak-piece-wHD-delta}. If
	$d_i\in D_i$ for all $i$, then for every $\ell$,
	\begin{equation}\label{eq:D_i}
		\left|x_\ell^*\left(\sum_{i=1}^\infty\lambda_i d_i-s_0\right)\right|
		\leq \sum_{i=1}^\infty\lambda_i |x_\ell^*(d_i-c_i)| <\eta.
	\end{equation}
	Hence
	\[
	\sum_{i=1}^\infty\lambda_iD_i\subset S\cap V\subset S\cap W.
	\]
	By the diameter assertion already proved, applied to the sequence
	$(D_i)_{i\geq1}$,
	\[
	\diam\left(\sum_{i=1}^\infty\lambda_iD_i\right)\geq \delta/4.
	\]
	Consequently $\diam(S\cap W)\geq \delta/4$, and thus $S$ is wHD-$(\delta/4)$.
\end{proof}

\begin{proof}[Proof of Theorem~\ref{thm:main-2}]
	We follow the proof of Theorem~\ref{thm:main-delta}, replacing
	Proposition~\ref{prop:G-delta-dense} by
	Proposition~\ref{prop:G-two-dense}. Let
	\[
	S=\sum_{i=1}^\infty \lambda_i C_i .
	\]
	As before, \(S\) is well-defined and \(S\subset B_X\). We first prove that
	\(\diam(S)=2\). Let \(0<\alpha<2\). By
	Proposition~\ref{prop:G-two-dense} and
	Lemma~\ref{lem:common-functional}, there exists \(x^*\in B_{X^*}\) such that
	\[
	\osc_{C_i}(x^*)>\alpha
	\quad \forall\, i\geq1.
	\]
	Hence, by Lemma~\ref{lem:diameter-formula},
	\[
	\diam(S)
	\geq
	\sum_{i=1}^\infty \lambda_i\osc_{C_i}(x^*)
	>
	\alpha.
	\]
	Since \(0<\alpha<2\) was arbitrary, \(\diam(S)\geq2\). Since \(S\subset B_X\),
	we conclude that \(\diam(S)=2\).
	
	It remains to prove the hereditary statement. Let \(W\subset X\) be weakly
	open with \(S\cap W\ne\emptyset\). Choose \(s_0=\sum_i\lambda_i c_i\in S\cap W\)
	and define the weak neighborhoods \(V_i\) and the sets $D_i=C_i\cap V_i$ as in the proof of Theorem~\ref{thm:main-delta}. Then each \(D_i\) is wHD2 by
	Lemma~\ref{lem:weak-piece-wHD-delta}, and (\ref{eq:D_i}) gives
	\[
	\sum_{i=1}^\infty\lambda_iD_i\subset S\cap W.
	\]
	Applying the diameter assertion just proved to the sequence \((D_i)_{i\geq1}\),
	we get
	\[
	\diam\left(\sum_{i=1}^\infty\lambda_iD_i\right)=2.
	\]
	Therefore \(\diam(S\cap W)=2\). Since \(W\) was arbitrary, \(S\) is wHD2.
\end{proof}

\begin{proof}[Proof of Corollary~\ref{cor:topological-main}]
	Let \(U\subset D\) be a non-empty \(\tau\)-open set. Suppose first that every non-empty \(\tau\)-open subset of \(D\) has
	diameter at least \(\delta\). Let \(W\subset X\) be weakly open and assume
	that \(U\cap W\ne\emptyset\). Since \(\tau\) contains the relative weak
	topology, \(D\cap W\) is \(\tau\)-open in \(D\). Therefore
	\[
	U\cap W=U\cap(D\cap W)
	\]
	is a non-empty \(\tau\)-open subset of \(D\), and hence
	\[
	\diam(U\cap W)\geq\delta.
	\]
	Thus every non-empty \(\tau\)-open subset of \(D\) is wHD-\(\delta\).
	Part~\textup{(i)} now follows from
	Theorem~\ref{thm:main-delta}.
	
	If every non-empty \(\tau\)-open subset of \(D\) has diameter two, the same
	argument shows that every such subset is wHD2. Part~\textup{(ii)} therefore
	follows from Theorem~\ref{thm:main-2}.
\end{proof}

\begin{proof}[Proof of Corollary~\ref{cor:topological-consequences}]
	The implication from the \(\tau\)-strong diameter two property to the
	\(\tau\)-diameter two property is immediate, since a single non-empty
	\(\tau\)-open set is a convex combination with one term. Conversely, assume
	that \(D\) has the \(\tau\)-diameter two property. By
	Corollary~\ref{cor:topological-main}\textup{(ii)}, every finite convex
	combination of non-empty \(\tau\)-open subsets of \(D\) is wHD2 and, in
	particular, has diameter two. Hence \(D\) has the
	\(\tau\)-strong diameter two property. This proves \textup{(i)}.
	
	The implication from \(\tau\)-CPCP to \(\tau\)-strong regularity is also
	immediate. We prove the converse by contraposition. Suppose that \(D\) fails
	the \(\tau\)-CPCP. Then there exist a non-empty bounded convex subset
	\(C\subset D\) and a number \(\delta>0\) such that every non-empty
	 \(\tau_C\)-open subset has diameter at least \(\delta\), where \(\tau_C\) denotes the topology induced by \(\tau\) on \(C\). Since
	\(\tau\) contains the relative weak topology on \(D\), the topology
	\(\tau_C\) contains the relative weak topology on \(C\). Applying
	Corollary~\ref{cor:topological-main}\textup{(i)} to \(C\), we conclude that
	every finite convex combination of non-empty \(\tau_C\)-open
	subsets has diameter at least \(\delta/4\). Consequently, \(C\)
	does not contain such convex combinations of arbitrarily small diameter.
	Thus \(D\) is not \(\tau\)-strongly regular. This proves \textup{(ii)}.
	
	Finally, suppose that there exists \(\delta>0\) such that every non-empty
	\(\tau\)-open subset of \(D\) has diameter at least \(\delta\). By
	Corollary~\ref{cor:topological-main}\textup{(i)}, every finite convex
	combination of non-empty \(\tau\)-open subsets of \(D\) has diameter at
	least \(\delta/4\).
	
	If \(D\) were \(\tau\)-strongly regular for open subsets, then applying this
	property to the non-empty convex \(\tau\)-open set \(D\) itself would yield
	finite convex combinations of non-empty \(\tau\)-open subsets of \(D\) of
	arbitrarily small diameter. This contradicts the preceding lower bound.
	Thus \(D\) is not \(\tau\)-strongly regular for open subsets. This proves
	\textup{(iii)}.
\end{proof}

\section{Auxiliary results}\label{sec:auxiliary}

This section proves Propositions~\ref{prop:G-delta-dense} and
\ref{prop:G-two-dense}. The key observation is that if differences of points of \(C\) can be localized on sets which are small for arbitrary \(L_1\)-control functions, then
functionals with large oscillation on \(C\) are weak-star dense.

\begin{lemma}\label{lem:localization-to-density}
	Let \(C\subset B_X\), and let \(0<\alpha<2\). Assume that for every
	\(w\in L_1(\mu)\) and every \(\eta>0\), there exist
	\(y,z\in C\) and a measurable set \(E\subset\Omega\) such that
	\[
	\int_E |w|\,d\mu<\eta
	\qquad\text{and}\qquad
	\int_E |y-z|\,d\mu>\alpha.
	\]
	Then \(G(C,\alpha)\) is weak-star open and weak-star dense in \(B_{X^*}\).
\end{lemma}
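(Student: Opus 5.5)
Openness is immediate. For fixed \(f,g\in C\) the map \(x^*\mapsto x^*(f-g)\) is weak-star continuous, and \(\osc_C\) is the supremum of these maps over \(f,g\in C\). So \(\osc_C\) is weak-star lower semicontinuous on \(B_{X^*}\), and \(G(C,\alpha)=\{\osc_C>\alpha\}\) is relatively weak-star open in \(B_{X^*}\).

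For density, Lemma~\ref{lem:Linfty-wstar-dense} lets us approximate any element of \(B_{X^*}\) by elements of \(J(B_{L_\infty(\mu)})\). It therefore suffices to show that every basic weak-star neighbourhood of a point \(J\varphi\), with \(\varphi\in B_{L_\infty(\mu)}\), meets \(G(C,\alpha)\). Fix such a neighbourhood
\[
U=\{x^*\in B_{X^*}:|x^*(h_k)-J\varphi(h_k)|<\varepsilon,\ 1\leq k\leq n\},
\]
with \(h_1,\dots,h_n\in X\) and \(\varepsilon>0\).

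Set \(w=\sum_{k=1}^n|h_k|\in L_1(\mu)\) and \(\eta=\varepsilon/2\). The hypothesis gives \(y,z\in C\) and a measurable set \(E\) with \(\int_E|w|\,d\mu<\eta\) and \(\int_E|y-z|\,d\mu>\alpha\). Define the modified function
\[
\psi=\varphi\,\mathbf 1_{\Omega\setminus E}+\sgn(y-z)\,\mathbf 1_E\in B_{L_\infty(\mu)}.
\]

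We check that \(J\psi\) has the two required properties.
\begin{itemize}
\item \emph{It lies in \(U\).} Since \(|\psi-\varphi|\leq 2\) on \(E\) and vanishes elsewhere, for each \(k\)
\[
|J\psi(h_k)-J\varphi(h_k)|\leq 2\int_E|h_k|\,d\mu\leq 2\int_E w\,d\mu<2\eta=\varepsilon.
\]
\item \emph{It lies in \(G(C,\alpha)\).} Here \(J\psi(y-z)=\int_E|y-z|\,d\mu+\int_{\Omega\setminus E}\varphi\,(y-z)\,d\mu\). The first term exceeds \(\alpha\), but the second term is not controlled: it could be as negative as \(-\int_{\Omega\setminus E}|y-z|\).
\end{itemize}

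Controlling that second term is the main obstacle. Three routes are available.

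\emph{First attempt: choose the sign of the off-\(E\) part.} Use \(\pm\varphi\) off \(E\). If \(\int_{\Omega\setminus E}\varphi(y-z)\geq0\), take \(\psi\) as above. Otherwise swap the roles of \(y\) and \(z\): put \(\psi'=\varphi\mathbf 1_{\Omega\setminus E}+\sgn(z-y)\mathbf 1_E\), so that \(J\psi'(z-y)=\int_E|y-z|-\int_{\Omega\setminus E}\varphi(y-z)>\alpha\). In both cases \(J\psi\) or \(J\psi'\) still lies in \(U\) by the same estimate. Hence the oscillation exceeds \(\alpha\), because \(\osc_C(x^*)\geq\max\{x^*(y-z),x^*(z-y)\}\).

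\emph{Second route: put the outside mass into \(w\).} If the sign trick were insufficient, the fallback is to enlarge the control function: shrink \(\eta\) and add terms to \(w\) so that the outside contribution becomes small. This is awkward, because \(y,z\) are chosen only after \(w\).

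\emph{Third route: use \(\alpha\)'s slack.} The cleanest alternative exploits the strict inequality \(\int_E|y-z|>\alpha\), with the margin absorbing errors.

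Since the sign-swap argument already closes the gap, no margin is needed. This gives weak-star density of \(G(C,\alpha)\) in \(J(B_{L_\infty(\mu)})\), and hence in \(B_{X^*}\).
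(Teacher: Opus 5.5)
Your proof is correct and is essentially the paper's argument. Your $\psi$ and $\psi'$ are exactly the paper's $\varphi_\pm=\varphi_0\mathbf 1_{E^c}\pm\sgn(y-z)\mathbf 1_E$, and your case split on the sign of $\int_{\Omega\setminus E}\varphi(y-z)\,d\mu$ is equivalent to the paper's observation that $J\varphi_+(y-z)-J\varphi_-(y-z)=2\int_E|y-z|\,d\mu>2\alpha$; the paragraphs on the second and third routes are superfluous and can be deleted.
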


\begin{proof}
	First we prove openness. Let $x_0^*\in G(C,\alpha)$. Choose
	$y,z\in C$ such that
	\[
	x^*_0(y-z)>\alpha.
	\]
	The map $x^*\mapsto x^*(y-z)$ is continuous for the weak-star topology
	on $B_{X^*}$. Hence
	\[
	\{x^*\in B_{X^*}: x^*(y-z)>\alpha\}
	\]
	is a weak-star neighborhood of $x_0^*$ contained in
	$G(C,\alpha)$.
	
	We now prove density. Let \(U\subset B_{X^*}\) be a non-empty weak-star open
	set. By Lemma~\ref{lem:Linfty-wstar-dense}, there is
	\(\varphi_0\in B_{L_\infty(\mu)}\) such that \(J\varphi_0\in U\). Then there exists a basic neighborhood $V \subset U$ of the form
	\[ V=
	\left\{
	x^*\in B_{X^*}:
	|(x^*-J\varphi_0)(g_i)|<\eta,\ 1\leq i\leq n
	\right\},\]
	where \(g_1,\dots,g_n\in L_1(\mu)\) and \(\eta>0\). Define $w:=\sum_{i=1}^n |g_i| \in L_1(\mu)$ and let $\nu(A):=\int_A w\,d\mu$. By hypothesis, there are $y$, $z \in C$ and $E\subset \Omega$ such that
	\[
	\nu(E)<\frac{\eta}{2} \qquad \text{and} \qquad \int_{E} |y-z|\, d\mu > \alpha.
	\]
	Define
	\[
	\varphi_\pm
	:=
	\varphi_0\mathbf 1_{E^c}
	\pm \sgn(y-z)\mathbf 1_E.
	\]
	Then $\varphi_+$, $\varphi_- \in B_{L_\infty(\mu)}$. Moreover, for every $i=1,\ldots,n$ we
	have
	\[
	\left|\int(\varphi_{+}-\varphi_0)g_i \,d\mu\right|
	\leq 2\int_{E} |g_i|\,d\mu
	\leq 2\nu(E)<\eta.
	\]
	Thus $J\varphi_+ \in V \subset U$. Similarly, $J \varphi_- \in V \subset U$. Finally,
	\begin{align*}
		J\varphi_+(y-z) - J\varphi_-(y-z) &= 2\int_E |y-z|\,d\mu>2\alpha.
	\end{align*}
	Hence either $\osc_C(J\varphi_+)>\alpha$ or $\osc_C(J\varphi_-)>\alpha$, and thus
	$U\cap G(C,\alpha)\ne\emptyset$. Since $U$ was an arbitrary weak-star open set, we deduce that
	$G(C,\alpha)$ is weak-star dense.
\end{proof}

\subsection{\texorpdfstring{\(L_1\)-localization for diameter two}
	{L1-localization for diameter two}}

This subsection is devoted to proving Proposition \ref{prop:G-two-dense}. It will follow from Lemma \ref{lem:localization-to-density} once we establish the appropriate $L_1$-localization principle. To start, we show that one can construct many points at almost maximal distance inside wHD2 sets.

\begin{lemma}\label{lem:diametral}
Let $Y$ be a Banach space and let $C\subset B_Y$ be wHD2. For every
integer $N\geq2$ and every $\beta>0$, there exist $f_1,\ldots,f_N\in C$ such
that
\[
        \|f_j-f_k\|>2-\beta \quad \forall\, j\ne k.
\]
\end{lemma}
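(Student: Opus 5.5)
We argue by induction on \(N\), strengthening the statement so that it can be iterated: for every non-empty weakly open \(W\) with \(C\cap W\neq\emptyset\), the set \(C\cap W\) contains \(N\) points at mutual distance \(>2-\beta\). This strengthening costs nothing, because \(C\cap W\) is again wHD2 by Lemma~\ref{lem:weak-piece-wHD-delta}. So it suffices to prove the original statement for an arbitrary wHD2 set and then apply it to \(C\cap W\).

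For \(N=2\) the claim is immediate. Since \(C\) is wHD2, taking \(W=Y\) gives \(\diam(C)=2\), so there are \(f_1,f_2\in C\) with \(\|f_1-f_2\|>2-\beta\).

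For the inductive step, suppose \(f_1,\dots,f_{N-1}\in C\) have been chosen with \(\|f_j-f_k\|>2-\beta\) for \(j\neq k\). We try to split one of the points, say \(f_{N-1}\), into two far-apart points while keeping every distance large. The key tool is weak lower semicontinuity of the norm. For each \(j\) and \(k\), pick a norm-one functional \(x_{jk}^*\in S_{Y^*}\) with \(x_{jk}^*(f_j-f_k)>2-\beta\). Then the sets
\[
U_j=\{x\in Y: x_{jk}^*(x-f_k)>2-\beta \ \text{and}\ x_{kj}^*(f_k-x)>2-\beta\ \text{for all } k\neq j\}
\]
are weakly open and contain \(f_j\) (using \(x_{kj}^*=-x_{jk}^*\)). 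Moreover, any point of \(U_j\) stays at distance \(>2-\beta\) from every \(f_k\) with \(k\neq j\). This handles only one moving point at a time, so we need to refine the construction so that all points may move simultaneously.

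We do this by choosing the points sequentially inside shrinking weak neighbourhoods. Set \(V_{N-1}=C\cap U_{N-1}\); it is non-empty and wHD2, so \(\diam(V_{N-1})=2\) and we can pick \(g,h\in V_{N-1}\) with \(\|g-h\|>2-\beta\). Both \(g\) and \(h\) lie in \(U_{N-1}\), so each is at distance \(>2-\beta\) from every \(f_k\), \(k\le N-2\). Replacing \(f_{N-1}\) by the pair \(g,h\) then gives \(N\) points: the pairs \((f_j,f_k)\) with \(j,k\le N-2\) are unchanged, every pair involving one of \(g,h\) and some \(f_k\) is controlled by \(U_{N-1}\), and the pair \((g,h)\) is controlled by the choice of \(g\) and \(h\). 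This completes the induction, and in fact no simultaneous perturbation is needed.

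The only delicate point is that the separating functionals are fixed in advance. This is what makes the constraints weakly open, and it is why membership in \(U_{N-1}\) is enough to keep the distances to the old points above \(2-\beta\).
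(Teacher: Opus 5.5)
Your argument is correct and is essentially the paper's proof: you induct on $N$, take a weakly open neighbourhood of one existing point on which the distance to all the other points stays above $2-\beta$, and use the wHD2 property to split that point into two points of $C$ at distance greater than $2-\beta$. The differences are cosmetic. The paper gets the weakly open sets $\{g:\|g-f_j\|>2-\beta\}$ directly from weak lower semicontinuity of the norm, whereas you build them from fixed norming functionals, which is just the proof of that semicontinuity; also, the strengthening announced in your first paragraph is never actually used.
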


\begin{proof}
Fix $\beta>0$. We prove this by induction on $N$. First, assume that $N=2$.
Since $\diam(C)=2$ there are $f_1$, $f_2 \in C$ with
$\|f_1-f_2\|>2-\beta$. Next, assume there exist $f_1,\ldots,f_N\in C$ such
that
\[
\|f_j-f_k\|>2-\beta \quad \forall \, j\ne k.
\]
For $j=1,\ldots,N-1$ define
$W_j=\{g \in Y : \|g-f_j\|>2-\beta\}$, which are weakly open since
$\|\cdot\|$ is weakly lower-semicontinuous. Thus,
\[ W:=\bigcap_{j=1}^{N-1} W_j\]
is weakly open. Moreover, $f_N \in W$ and thus $W\cap C\neq \emptyset$.
Since $C$ is wHD2, we have that $\diam(C\cap W)=2$. Consequently, there are
$g_1$, $g_2 \in C\cap W$ with $\|g_1-g_2\|>2-\beta$. The set
$\{f_1,\ldots,f_{N-1},g_1,g_2\}$ satisfies the desired condition.
\end{proof}

The idea behind the desired $L_1$-localization principle is that points in \(B_{L_1(\mu)}\) which are almost at distance two cannot have much same-sign overlap. Thus, in a large pairwise
almost diametral family, the regions where the functions are large must be
spread out. One may
think of these large-value regions as spikes which are almost disjoint on
average. We provide quantitative estimates for this phenomenon, relative to an arbitrary finite control
measure of the form
\[
\nu(A)=\int_A |w|\,d\mu,\qquad w\in L_1(\mu).
\]

The next lemma records the elementary overlap estimate behind the argument.
If two functions in the unit ball of \(L_1\) are almost at distance two, then
their positive parts have very small overlap, and the same is true of their
negative parts.

\begin{lemma}\label{lem:loc_1}
Let $f,g\in B_{L_1(\mu)}$ with $\|f-g\|_1>2-\beta$. Then
\[
        \int \min(f^+,g^+)\,d\mu+
        \int \min(f^-,g^-)\,d\mu < \frac{\beta}{2},
\]
where $f^+=\max\{f,0\}$ and $f^-=\max\{-f,0\}$.
\end{lemma}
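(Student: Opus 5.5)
The plan is to obtain the estimate from a single pointwise identity. For real numbers \(a,b\), the decomposition \(a=a^+-a^-\) and the identity \(|a-b|=|a|+|b|-2\min(a^+,b^+)-2\min(a^-,b^-)\) should follow from a case check. If \(a\) and \(b\) have the same sign, say both are nonnegative, then \(|a-b|=a+b-2\min(a,b)\), and the negative parts vanish. If they have opposite signs, say \(a\geq 0\geq b\), then \(|a-b|=a-b=|a|+|b|\) and both minima vanish. The remaining cases follow by symmetry, swapping \(a\) and \(b\) or replacing \(a,b\) by \(-a,-b\).

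Next, I will apply this identity pointwise with \(a=f(\omega)\) and \(b=g(\omega)\). All terms are integrable since \(f,g\in L_1(\mu)\) and the minima are dominated by \(|f|\) and \(|g|\). Integrating over \(\Omega\) then gives
\[
\|f-g\|_1=\|f\|_1+\|g\|_1-2\int\min(f^+,g^+)\,d\mu-2\int\min(f^-,g^-)\,d\mu .
\]

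Finally, I will combine this with the hypotheses. Using \(\|f\|_1,\|g\|_1\leq 1\) and \(\|f-g\|_1>2-\beta\), the identity yields
\[
2\int\min(f^+,g^+)\,d\mu+2\int\min(f^-,g^-)\,d\mu=\|f\|_1+\|g\|_1-\|f-g\|_1<2-(2-\beta)=\beta .
\]
Dividing by \(2\) gives the claimed strict inequality. There is no real obstacle here; the only point needing care is verifying the pointwise identity in every sign case, which is elementary.
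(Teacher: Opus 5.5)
Your proposal is correct and follows the paper's proof essentially verbatim: the pointwise identity \(|a-b|=|a|+|b|-2\min(a^+,b^+)-2\min(a^-,b^-)\), integrated over \(\Omega\) and combined with \(\|f\|_1,\|g\|_1\leq1\) and \(\|f-g\|_1>2-\beta\). Your explicit sign-case check and the integrability remark fill in details that the paper leaves as ``straightforward.''
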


\begin{proof}
For real numbers $a,b$, it is straightforward to see that
	\[
	|a-b|=|a|+|b|-2\min(a^+,b^+)-2\min(a^-,b^-),
	\]
	where $a^+=\max\{a,0\}$ and $a^-=\max\{-a,0\}$. Integrating gives
\[
\|f-g\|_1
=\|f\|_1+\|g\|_1
-2\int\min(f^+,g^+)\,d\mu
-2\int\min(f^-,g^-)\,d\mu.
\]
Since $\|f\|_1,\|g\|_1\leq1$, the assumption
$\|f-g\|_1>2-\beta$ gives the estimate.
\end{proof}

Next we reduce the control measure to a finite-measure region on which its
density is bounded.

\begin{lemma}\label{lem:loc_2}
Let $\nu(A)=\int_A |w| \,d\mu$, where $w\in L_1(\mu)$. For every $\eta>0$
there are a measurable set $H\subset\Omega$ with $\mu(H)<\infty$ and a
constant $M\geq1$ such that
\[
        \nu(H^c)<\eta
        \quad \text{and} \quad
        \nu(A)\leq M\mu(A)\quad \forall\, A\subset H\text{ measurable}.
\]
\end{lemma}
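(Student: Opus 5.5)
Since \(w\in L_1(\mu)\), the function \(|w|\) is integrable. I would split \(\Omega\) according to the size of \(|w|\), taking
\[
H=\{\,1/M\leq |w|\leq M\,\}
\]
for a large constant \(M\geq1\) chosen below. The upper bound \(|w|\leq M\) on \(H\) gives \(\nu(A)=\int_A|w|\,d\mu\leq M\mu(A)\) for every measurable \(A\subset H\). The lower bound \(|w|\geq 1/M\) on \(H\) gives, by Chebyshev's inequality,
\[
\mu(H)\leq M\int_\Omega|w|\,d\mu<\infty .
\]
So both requirements except the smallness of \(\nu(H^c)\) hold automatically once \(H\) has this form.

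It remains to choose \(M\) with \(\nu(H^c)<\eta\). The complement decomposes as
\[
H^c=\{w=0\}\cup\{0<|w|<1/M\}\cup\{|w|>M\}.
\]
On \(\{w=0\}\) the measure \(\nu\) vanishes, so this piece contributes nothing. For the other two pieces I would use dominated convergence. As \(M\to\infty\), the functions \(|w|\mathbf 1_{\{0<|w|<1/M\}}\) and \(|w|\mathbf 1_{\{|w|>M\}}\) tend to \(0\) pointwise, because \(w\) is finite almost everywhere. Both are dominated by the integrable function \(|w|\). Hence both integrals tend to \(0\), and we can fix \(M\geq1\) so large that their sum is below \(\eta\). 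This gives \(\nu(H^c)<\eta\).

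There is no real obstacle here. The one point that needs care is the measure space being arbitrary, possibly non-\(\sigma\)-finite, so the argument must not rely on any exhaustion of \(\Omega\) by sets of finite measure. This is handled by taking \(H\) to be a level set of \(w\) itself: Chebyshev's inequality then forces \(\mu(H)<\infty\). The set \(\{w=0\}\), which may be huge and of infinite measure, is simply left out of \(H\), and this costs nothing because \(\nu\) is zero there.
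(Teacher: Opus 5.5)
Your proof is correct and takes essentially the paper's route: the same level-set truncation $\{1/M\le|w|\le M\}$, with the upper bound giving $\nu(A)\le M\mu(A)$ on $H$ and Chebyshev giving $\mu(H)\le M\|w\|_1$. The only difference is cosmetic: the paper uses strict inequalities and monotone convergence of $|w|\mathbf 1_{H_n}\uparrow|w|$, whereas you apply dominated convergence separately to the two tails.
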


\begin{proof} For $n \in \mathbb{N}$, let $H_n=\{ n^{-1} < |w| < n\}$ and
define $w_n=|w| \mathbf{1}_{H_n}$. Then, $w_n \uparrow |w|$ and thus by the
monotone convergence theorem, there is $m \in \mathbb{N}$ such that
	\[ \int |w| \, d\mu - \int w_m \, d\mu = \int_{H_m^c} |w| \, d \mu < \eta.\]
Set $H=H_m$ and $M=m$. Then $\nu(H^c)<\eta$ and
\[
        \mu(H)= \int_H 1 \, d\mu \leq M \int_H |w|\, d\mu \leq M\|w\|_1 <\infty.
\]
Finally, for any measurable $A \subset H$ we have
\[
        \nu(A)=\int_A |w| \,d\mu\leq M\mu(A).\qedhere
\]
\end{proof}

We now count how often the large-value sets of many almost diametral
functions can overlap. The same-sign overlap estimate implies that these
sets cannot overlap too often.

\begin{lemma}\label{lem:loc_3}
Let $f_1,\ldots,f_N\in B_{L_1(\mu)}$ with $\|f_j-f_k\|_1>2-\beta$ for
$j\neq k$. Given a measurable set $H\subset\Omega$ and $\theta>0$, define
$A_j = H\cap \{|f_j|>\theta\}$ for $j=1,\ldots,N$. Then
\[
        \sum_{j=1}^N \mu(A_j)
        \leq 2\mu(H)+\frac{N(N-1)\beta}{4\theta}.
\]
\end{lemma}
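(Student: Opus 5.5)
The plan is to split each large-value set by sign, count overlaps pointwise on \(H\), and charge every excess overlap to a same-sign pair, whose total is controlled by Lemma~\ref{lem:loc_1}.

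\emph{Step 1 (splitting by sign).} Write \(A_j=P_j\cup Q_j\) as a disjoint union, where
\[
P_j=H\cap\{f_j>\theta\},\qquad Q_j=H\cap\{f_j<-\theta\}.
\]
Set \(p(\omega)=\sum_j\mathbf 1_{P_j}(\omega)\) and \(q(\omega)=\sum_j\mathbf 1_{Q_j}(\omega)\). Then
\[
\sum_j\mu(A_j)=\int_H (p+q)\,d\mu .
\]

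\emph{Step 2 (pointwise bound).} For any integer \(k\geq0\) we have \(k\leq 1+\binom{k}{2}\): this holds for \(k=0,1,2\), and for \(k\geq 3\) we have \(\binom{k}{2}\geq k\). Applying this to \(p\) and to \(q\) gives, at every point of \(H\),
\[
p+q\leq 2+\binom{p}{2}+\binom{q}{2}.
\]
Here \(\binom{p}{2}\) is the number of unordered pairs \(j<k\) with \(\omega\in P_j\cap P_k\), and similarly \(\binom{q}{2}\) counts pairs with \(\omega\in Q_j\cap Q_k\). Integrating over \(H\) yields
\[
\sum_j\mu(A_j)\leq 2\mu(H)+\sum_{j<k}\bigl(\mu(P_j\cap P_k)+\mu(Q_j\cap Q_k)\bigr).
\]
If \(\mu(H)=\infty\) the claimed inequality is trivial, so we may assume \(\mu(H)<\infty\). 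The pair-overlap terms are finite by the next step.

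\emph{Step 3 (pair estimate).} On \(P_j\cap P_k\) we have \(\min(f_j^+,f_k^+)>\theta\). Similarly, on \(Q_j\cap Q_k\) we have \(\min(f_j^-,f_k^-)>\theta\). Chebyshev's inequality combined with Lemma~\ref{lem:loc_1} therefore gives
\[
\theta\bigl(\mu(P_j\cap P_k)+\mu(Q_j\cap Q_k)\bigr)\leq\int\min(f_j^+,f_k^+)\,d\mu+\int\min(f_j^-,f_k^-)\,d\mu<\frac{\beta}{2}.
\]
There are \(N(N-1)/2\) unordered pairs, so the total overlap is at most
\[
\frac{N(N-1)}{2}\cdot\frac{\beta}{2\theta}=\frac{N(N-1)\beta}{4\theta},
\]
which gives the claimed bound.

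\emph{Where care is needed.} The main point is the sign split in Step 2. A naive overlap count on \(|f_j|>\theta\) would also include opposite-sign overlaps, and those are not controlled by Lemma~\ref{lem:loc_1}. Splitting by sign costs the factor \(2\) in front of \(\mu(H)\), and this matches the statement.
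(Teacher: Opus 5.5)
Your proof is correct and follows the paper's argument essentially step by step. Both split $A_j$ by sign, integrate the pointwise bound $n\le 1+\binom{n}{2}$ over $H$, and use $\theta\,\mu(A_j^+\cap A_k^+)\le\int\min(f_j^+,f_k^+)\,d\mu$ (and its negative-part analogue) together with Lemma~\ref{lem:loc_1}; your explicit treatment of the case $\mu(H)=\infty$ is a harmless detail the paper leaves implicit.
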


\begin{proof}
Split $A_j=A_j^+ \cup A_j^-$, where
\[
        A_j^+=H\cap\{f_j>\theta\},
        \qquad
        A_j^-=H\cap\{f_j<-\theta\}.
\]
A simple combinatorial argument shows that for any $t\in H$,
\begin{equation}\label{eq:loc_3_1} \sum_{j=1}^N \mathbf{1}_{A_j^+}(t) \leq 1 + \sum_{j<k} \mathbf{1}_{A_j^+ \cap A_k^+}(t).
\end{equation}
Indeed, the left term counts how many sets $A_j^+$ contain the point $t$.
Denote such number by $n_t$. If $n_t=0$ or $n_t=1$, then the inequality is
trivial. Finally, if $n_t\geq 2$, then $t$ belongs to $\binom{n_t}{2}$
intersections of the form $A_j^+ \cap A_k^+$ with $j<k$. Since
$n_t \leq 1 + \binom{n_t}{2}$, the inequality follows. Integrating
(\ref{eq:loc_3_1}) gives
\begin{equation}\label{eq:loc_3_2}
       \sum_{j=1}^N \mu(A_j^+) \leq \mu(H) + \sum_{j<k} \mu(A_j^+ \cap A_k^+).
\end{equation}
Moreover, observe that
\begin{equation}\label{eq:loc_3_3}
	\mu(A_j^+ \cap A_k^+) = \int_{A_j^+ \cap A_k^+} 1 \, d\mu \leq \frac{1}{\theta} \int_{A_j^+ \cap A_k^+} \min(f_j^+,f_k^+) \, d\mu.
\end{equation}
Furthermore, the same estimates hold for the negative sets $A_j^-$. Therefore,
combining (\ref{eq:loc_3_2}) and (\ref{eq:loc_3_3}), and applying
Lemma \ref{lem:loc_1} gives
\[ \sum_{j=1}^N \mu(A_j) = \sum_{j=1}^N \mu(A_j^+) + \sum_{j=1}^N \mu(A_j^-) \leq 2\mu(H) + \sum_{j<k} \frac{\beta}{2\theta} = 2\mu(H) + \frac{N(N-1)\beta}{4\theta}.\qedhere\]
\end{proof}

The counting estimate now yields the desired localization. We first choose
the threshold \(\theta\) small enough so that the functions are small outside
their large-value sets. Then we choose \(N\) large and the separation error
\(\beta\) small so that two large-value sets occupy a set of small control
measure.

\begin{lemma}\label{lem:loc_4}
	Let \(\nu(A)=\int_A |w|\,d\mu\), where \(w\in L_1(\mu)\). For every
	\(0<\alpha<2\) and every \(\eta>0\), there exist an integer \(N\geq2\) and
	a number \(\beta>0\) such that whenever \(f_1,\ldots,f_N\in B_{L_1(\mu)}\)
	satisfy
	\[
	\|f_j-f_k\|_1>2-\beta
	\quad \forall\, j\ne k,
	\]
	there are indices \(j\ne k\) and a measurable set \(E\subset\Omega\) such
	that
	\[
	\nu(E)<\eta
	\qquad\text{and}\qquad
	\int_E |f_j-f_k|\,d\mu>\alpha.
	\]
\end{lemma}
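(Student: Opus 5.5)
The plan is to build \(E\) from three pieces: the complement of a finite-measure region where \(\nu\) is controlled by \(\mu\), together with the large-value sets of two suitably chosen functions \(f_j,f_k\). Outside \(E\) both functions are uniformly small on a set of finite measure, so almost all of \(\|f_j-f_k\|_1>2-\beta\) must live on \(E\). The counting estimate of Lemma~\ref{lem:loc_3} ensures that two of the large-value sets have small total \(\mu\)-measure, and hence small \(\nu\)-measure.

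\textbf{Fixing the parameters.} I fix \(w\), \(\alpha\) and \(\eta\), and choose them in the following order, independently of the \(f_j\).
\begin{enumerate}[label=\textup{(\arabic*)}]
\item Apply Lemma~\ref{lem:loc_2} with \(\eta/2\) to obtain \(H\) with \(\mu(H)<\infty\), a constant \(M\geq1\), \(\nu(H^c)<\eta/2\), and \(\nu(A)\leq M\mu(A)\) for all measurable \(A\subset H\).
\item Choose \(\theta>0\) with \(2\theta\mu(H)<(2-\alpha)/2\).
\item Choose \(N\geq2\) with \(4M\mu(H)/N<\eta/4\).
\item Choose \(\beta>0\) with \(\beta<(2-\alpha)/2\) and \(M(N-1)\beta/(2\theta)<\eta/4\).
\end{enumerate}

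\textbf{Choosing the pair.} Let \(f_1,\dots,f_N\) be as in the statement, and set \(A_j=H\cap\{|f_j|>\theta\}\). Lemma~\ref{lem:loc_3} gives
\[
\sum_{j=1}^N\mu(A_j)\leq 2\mu(H)+\frac{N(N-1)\beta}{4\theta}.
\]
I pick \(j\ne k\) to be the indices of the two smallest values of \(\mu(A_j)\). The sum of the two smallest values is at most \(2/N\) times the total, so
\[
\mu(A_j)+\mu(A_k)\leq \frac{4\mu(H)}{N}+\frac{(N-1)\beta}{2\theta}.
\]
Setting \(E=H^c\cup A_j\cup A_k\), the choices in (3) and (4) give
\[
\nu(E)<\frac{\eta}{2}+M\bigl(\mu(A_j)+\mu(A_k)\bigr)<\frac{\eta}{2}+\frac{\eta}{4}+\frac{\eta}{4}=\eta.
\]

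\textbf{Mass estimate.} On \(E^c=H\setminus(A_j\cup A_k)\) we have \(|f_j|,|f_k|\leq\theta\), so
\[
\int_{E^c}|f_j-f_k|\,d\mu\leq 2\theta\mu(H).
\]
Therefore
\[
\int_E|f_j-f_k|\,d\mu>2-\beta-2\theta\mu(H)>2-\frac{2-\alpha}{2}-\frac{2-\alpha}{2}=\alpha.
\]

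The step I expect to need the most care is the order of the choices \(H\to\theta\to N\to\beta\). The threshold \(\theta\) must depend only on \(\mu(H)\). \(N\) must absorb the \(2\mu(H)\) term, which is weighted by \(M\). \(\beta\) must then kill the quadratic overlap term, which grows like \(N^2/\theta\) before averaging.

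A further point is essential: \(H^c\) must be included in \(E\). The functions may carry substantial mass outside \(H\), and that mass is only harmless because \(\nu(H^c)\) is already small.
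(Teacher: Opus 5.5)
Your proof is correct and follows the paper's argument essentially step for step: the same set \(H\) from Lemma~\ref{lem:loc_2}, the same large-value sets \(A_j\), the choice of the two indices with smallest \(\mu(A_j)\), and \(E=H^c\cup A_j\cup A_k\). The only difference is bookkeeping: you bound the averaged overlap term \(M(N-1)\beta/(2\theta)\) directly against \(\eta\) instead of absorbing \(N(N-1)\beta/(4\theta)\) into \(\mu(H)\), and since none of your choices divides by \(\mu(H)\), you avoid the paper's preliminary case split that secures \(\mu(H)>0\).
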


\begin{proof}
If $\nu(\Omega)<\eta$, take $N=2$, $\beta=2-\alpha$, and $E=\Omega$; the
conclusion is immediate. We may therefore assume $\nu(\Omega)\geq\eta$.
Applying Lemma~\ref{lem:loc_2}, there are $H\subseteq \Omega$ with
$\mu(H)<\infty$ and $M\geq 1$ such that
\[
        \nu(H^c)<\frac{\eta}{2},
        \quad \text{and} \quad
        \nu(A)\leq M\mu(A) \quad \forall\, A\subset H.
\]
If $\mu(H)=0$, then $\nu(\Omega)<\eta/2$, contrary to the present case. Thus
$\mu(H)>0$. Choose $\theta>0$, $N\geq 2$, and $\beta>0$ satisfying
\[
        \theta < \frac{2-\alpha}{4\mu(H)}, \qquad N\geq \frac{12M\mu(H)}{\eta}, \qquad \beta\leq \min\left \{\frac{2-\alpha}{2},\frac{4\theta \mu(H)}{N(N-1)}\right \}.
\]
Let $f_1,\ldots,f_N\in B_{L_1(\mu)}$ with $\|f_j-f_k\|_1>2-\beta$ for
$j\neq k$, and recall that $A_j=H\cap\{|f_j|>\theta\}$.
By Lemma~\ref{lem:loc_3},
\[
        \sum_{j=1}^N \mu(A_j)
        \leq 2\mu(H) + \frac{N(N-1)\beta}{4\theta} \leq 3\mu(H),
\]
where we have used the choice of $\beta$. Choose two indices \(j\ne k\) for which
\(\mu(A_j)\) and \(\mu(A_k)\) are minimal. Then
\[
\mu(A_j\cup A_k)
\leq
\mu(A_j)+\mu(A_k)
\leq
\frac{2}{N}\sum_{\ell=1}^{N}\mu(A_\ell)
\leq
\frac{6\mu(H)}{N}\leq \frac{\eta}{2M},
\]
where we have used the choice of $N$ in the last inequality. Set
\[
        E=H^c\cup A_j\cup A_k.
\]
Then
\[
        \nu(E) \leq \nu(H^c)+\nu(A_j \cup A_k) < \frac{\eta}{2} + M\mu(A_j\cup A_k)\leq \eta.
\]
Finally, on $E^c\subset H\setminus(A_j\cup A_k)$ we have $|f_j|\leq\theta$
and $|f_k|\leq\theta$. Therefore
\begin{align*}
        \int_{E}|f_j-f_k|\,d\mu& = \|f_j-f_k\|_1 - \int_{E^c} |f_j-f_k|\, d\mu \geq 2-\beta - \int_{E^c}(|f_j|+|f_k|)\,d\mu
        \\
        &\geq 2-\beta -2\theta\mu(H) > 2-\frac{2-\alpha}{2} -\frac{2-\alpha}{2}=\alpha,
\end{align*}
where we have used the choices of $\theta$ and $\beta$.
\end{proof}

With Lemma \ref{lem:loc_4} in hand, Proposition \ref{prop:G-two-dense} follows immediately from Lemma \ref{lem:localization-to-density}.

\begin{proof}[Proof of Proposition \ref{prop:G-two-dense}]
	Fix $\eta>0$, $w \in L_1(\mu)$, and $0<\alpha<2$. Define $\nu(A)=\int_A |w|\, d\mu$. By Lemma \ref{lem:loc_4}, there is an integer $N\geq2$ and $\beta>0$ such that whenever $f_1,\ldots,f_N \in B_{L_1(\mu)}$ satisfy
	\[ \|f_j - f_k\|_1 > 2-\beta \quad \forall \, j \neq k,\]
	there are indices $j\neq k$ and a measurable set $E \subseteq \Omega$ such that
	\[ \nu(E) <\eta \qquad \text{and} \qquad \int_{E} |f_j-f_k| \, d\mu > \alpha. \]
	Since $C$ is wHD2, Lemma \ref{lem:diametral} guarantees the existence of $f_1,\ldots,f_N$ in $C$ satisfying the required separation condition. Applying Lemma \ref{lem:localization-to-density} we obtain that $G(C,\alpha)$ is weak-star open and weak-star dense.
\end{proof}

\subsection{\texorpdfstring{\(L_1\)-localization for wHD-$\delta$ sets}
	{L1-localization for wHD-delta sets}}

The combinatorial localization argument used in the diameter two case is an
endpoint argument. It relies on the fact that two points of \(B_{L_1(\mu)}\)
which are almost at distance two have very small same-sign overlap. For a
general wHD-\(\delta\) set, with \(\delta<2\), one cannot expect to find
families which are almost at distance two. We therefore use a different
principle. The idea is that if the desired localization fails, then the
differences \(y-z\), with \(y,z\in C\), are almost uniformly integrable with
respect to the prescribed control measure. After a truncation, this puts
\(C\) inside a small norm-neighborhood of a weakly compact set.
A quantitative form of dentability then gives a weakly open piece of \(C\)
of small diameter, contradicting the wHD-\(\delta\) assumption.

We now recall the quantitative form of dentability used below. Let \(A\) be a
bounded subset of a Banach space \(Y\). The \emph{De Blasi measure of weak noncompactness} of \(A\) is
\[
\omega(A)
=
\inf\left\{
r>0:
A\subset K+rB_Y
\text{ for some weakly compact }K\subset Y
\right\}.
\]
We also use the \emph{double-limit measure}
\[
\gamma(A)
=
\sup
\left|
\lim_n\lim_m y_m^*(a_n)
-
\lim_m\lim_n y_m^*(a_n)
\right|,
\]
where the supremum is taken over all sequences \((a_n)\subset A\) and
\((y_m^*)\subset B_{Y^*}\) for which the two iterated limits exist. These
two quantities were studied by Angosto and Cascales in
\cite[Definition~1]{AngostoCascales2009}. We shall use their estimate
\[
\gamma(A)\leq 2\omega(A),
\]
which follows from \cite[Theorem~2.3]{AngostoCascales2009}. We also need a quantitative dentability index. If \(A\subset Y\) is bounded,
define
\[
\operatorname{Dent}(A)
=
\inf\left\{
\operatorname{rad}(S):
S \text{ is a slice of } A
\right\},
\]
where
\[
\operatorname{rad}(S)
=
\inf_{y\in Y}\sup_{s\in S}\|s-y\|.
\]
Here a slice of \(A\) means a non-empty set of the form
\[
S(A,y^*,a)
:=
\left\{y\in A:y^*(y)>\sup_A y^* - a\right\},
\qquad y^*\in Y^*,\ a>0.
\]
This index was introduced by Cascales, P\'erez and Raja in
\cite[Definition~4.1]{CascalesPerezRaja2014}. We shall use two facts from
their paper. First, \cite[Proposition~4.5]{CascalesPerezRaja2014} gives that for every bounded set \(A\subset Y\),
\[
\operatorname{Dent}(A)
=
\operatorname{Dent}(\operatorname{co}(A))
=
\operatorname{Dent}\left (\overline{\operatorname{co}(A)}\right ).
\]
Second, if \(C\subset Y\) is bounded and convex, then \cite[Proposition~4.20]{CascalesPerezRaja2014} yields
\[
\operatorname{Dent}(C)\leq \gamma(C).
\]
The following lemma is the only consequence of these quantitative notions that we shall use.

\begin{lemma}\label{lem:approx-dentability}
	Let \(Y\) be a Banach space and let \(A\subset Y\) be non-empty and bounded. Suppose that
	there are a weakly compact set \(K\subset Y\) and a number \(r>0\)
	such that
	\[
	A\subset K+rB_Y.
	\]
	Then, for every \(\varepsilon>0\), there exists a weakly open set
	\(W\subset Y\) such that \(A\cap W\ne\emptyset\) and
	\[
	\diam(A\cap W)<4r+\varepsilon.
	\]
\end{lemma}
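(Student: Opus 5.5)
The plan is to chain the two quoted facts from Angosto--Cascales and Cascales--P\'erez--Raja, and then turn a slice of small radius in the closed convex hull into a weakly open piece of \(A\) of small diameter.

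\emph{Step 1: bound the double-limit measure.} Set \(B=\overline{\operatorname{co}}(A)\), which is bounded, closed and convex. The set \(\overline{\operatorname{co}}(K)\) is weakly compact by the Krein--\v{S}mulian theorem. The set \(\overline{\operatorname{co}}(K)+rB_Y\) is convex. It is also closed, since it is the sum of a weakly compact set and a weakly closed set, hence weakly closed. Since it contains \(A\), it contains \(B\), so \(\omega(B)\leq r\). The Angosto--Cascales estimate then gives \(\gamma(B)\leq 2\omega(B)\leq 2r\).

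\emph{Step 2: obtain a slice of small radius.} By \cite[Proposition~4.20]{CascalesPerezRaja2014}, \(\operatorname{Dent}(B)\leq\gamma(B)\leq 2r\). By \cite[Proposition~4.5]{CascalesPerezRaja2014}, \(\operatorname{Dent}(A)=\operatorname{Dent}(B)\leq 2r\). Hence there is a slice \(S=S(A,y^*,a)\) of \(A\) itself with \(\operatorname{rad}(S)<2r+\varepsilon/4\). Choose \(y\in Y\) with \(\sup_{s\in S}\|s-y\|<2r+\varepsilon/2\). Then any \(s,t\in S\) satisfy \(\|s-t\|\leq\|s-y\|+\|y-t\|<4r+\varepsilon\).

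\emph{Step 3: realize the slice as a weakly open piece.} Put \(W=\{x\in Y: y^*(x)>\sup_A y^*-a\}\). This set is weakly open, \(A\cap W=S\ne\emptyset\) by the definition of a slice, and \(\diam(A\cap W)\leq\sup_{s,t\in S}\|s-t\|<4r+\varepsilon\). To keep the inequality strict when passing to the supremum, I will pick the slack in Step 2 slightly smaller, say \(\varepsilon/8\) in place of \(\varepsilon/4\).

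The main obstacle is Step 1, specifically checking that the hypotheses of the cited results apply as stated. Proposition~4.20 needs a bounded convex set, which is why I pass to \(B\) and then return to \(A\) via Proposition~4.5. The bound \(\omega(B)\leq r\) relies on the enlargement \(\overline{\operatorname{co}}(K)+rB_Y\) being convex and closed, which is where the Krein--\v{S}mulian theorem is used. Everything else is a direct triangle-inequality computation.
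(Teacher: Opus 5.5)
Your proposal is correct and follows essentially the same route as the paper: replace $K$ by its weakly compact closed convex hull, get $\omega\leq r$ for a convex hull of $A$, chain $\operatorname{Dent}\leq\gamma\leq 2\omega$ with Proposition~4.5 to obtain a slice of $A$ of radius close to $2r$, and take the open half-space as $W$. The only difference is that you use $\overline{\operatorname{co}}(A)$, which forces the extra check that $\overline{\operatorname{co}}(K)+rB_Y$ is closed. The paper uses $\operatorname{co}(A)$, also covered by Proposition~4.5, and so needs only the convexity of $K+rB_Y$.
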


\begin{proof}
	Replacing \(K\) by its weakly closed convex hull, which is again weakly compact, we may assume that \(K\) is convex. Since \(A\subset K+rB_Y\), we have
	\[
	\operatorname{co}(A)\subset K+rB_Y.
	\]
	Therefore $\omega(\operatorname{co}(A))\leq r$. Using the quoted estimates, we get
	\[
	\begin{aligned}
		\operatorname{Dent}(A)
		&=
		\operatorname{Dent}(\operatorname{co}(A))
		\leq
		\gamma(\operatorname{co}(A))
		\leq
		2\omega(\operatorname{co}(A))
		\leq
		2r.
	\end{aligned}
	\]
	Hence there is a slice \(S\) of \(A\) such that
	\[
	\operatorname{rad}(S)<2r+\frac{\varepsilon}{2}.
	\]
	Since $\diam(S)\leq 2\operatorname{rad}(S)$,
	we obtain
	\[
	\diam(S)<4r+\varepsilon.
	\]
	Writing the slice as
	\[
	S=\{y\in A:y^*(y)>\sup_A y^*-a\},
	\]
	and setting
	\[
	W=\{y\in Y:y^*(y)>\sup_A y^*-a\},
	\]
	we have that \(W\) is a weakly open set, \(A\cap W=S\ne\emptyset\),
	and
	\[
	\diam(A\cap W)<4r+\varepsilon.\qedhere
	\]
\end{proof}

The next lemma is the \(L_1\)-specific part of the argument. It says that if
differences of points of \(C\) cannot put much mass on sets which are small
for a fixed \(L_1\)-control measure, then \(C\) is close to a
weakly compact set.

\begin{lemma}\label{lem:close-to-weakly-compact}
	Let \(C\subset B_{L_1(\mu)}\), let $w \in L_1(\mu)$, and define $\nu(A)=\int_A |w|\, d\mu$.
	Suppose that for some \(r>0\) and some \(\eta_0>0\), one has
	\[
	\int_E |y-z|\,d\mu\leq r
	\]
	for all \(y,z\in C\) and every measurable set \(E\subset\Omega\) with
	\(\nu(E)<\eta_0\). Then there exists a weakly compact set
	\(K\subset L_1(\mu)\) such that
	\[
	C\subset K+rB_{L_1(\mu)}.
	\]
\end{lemma}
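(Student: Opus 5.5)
The plan is to take $K$ to be a translate of an order interval $\{g:|g|\leq T|w|\}$ for a large constant $T$. Such a set is weakly compact, and the hypothesis controls exactly the part of $y-c_0$ that sticks out of it.

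If \(C=\emptyset\) there is nothing to prove, so fix \(c_0\in C\). Choose \(T>0\) with \(2/T<\eta_0\), and set
\[
K=c_0+\{g\in L_1(\mu):|g|\leq T|w|\ \mu\text{-a.e.}\}.
\]
Let \(y\in C\) and write \(d=y-c_0\), so that \(\|d\|_1\leq 2\). Put \(E=\{|d|>T|w|\}\). On \(E\) we have \(|w|<|d|/T\), and therefore
\[
\nu(E)=\int_E|w|\,d\mu\leq\frac1T\int_E|d|\,d\mu\leq\frac2T<\eta_0 .
\]
On the part of \(E\) where \(w=0\) the integrand is zero, so this region contributes nothing to \(\nu(E)\). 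Since \(y,c_0\in C\), the hypothesis gives \(\int_E|d|\,d\mu\leq r\). Now set \(g=d\mathbf 1_{E^c}\). Then \(|g|\leq T|w|\), so \(c_0+g\in K\). Moreover \(\|y-(c_0+g)\|_1=\|d\mathbf 1_E\|_1\leq r\). Hence \(y\in K+rB_{L_1(\mu)}\), and this holds for every \(y\in C\).

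It remains to show that the order interval \(I=\{g:|g|\leq T|w|\}\) is weakly compact. This is the only delicate point, because \(\mu\) is arbitrary and \(L_\infty(\mu)\) need not be the dual of \(L_1(\mu)\). To get around this, restrict to \(S=\{w\neq0\}\). Since \(w\in L_1(\mu)\), the set \(S\) is \(\sigma\)-finite, so \(L_\infty(\mu|_S)=L_1(\mu|_S)^*\), and its unit ball is weak-star compact. Every element of \(I\) vanishes a.e. off \(S\) and is of the form \(T\varphi|w|\) with \(\varphi\in B_{L_\infty(\mu|_S)}\), namely \(\varphi=g/(T|w|)\) on \(S\). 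Thus \(I\) is the image of this ball under the map \(\Phi(\varphi)=T\varphi|w|\), extended by zero off \(S\).

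This map is weak-star-to-weak continuous. Indeed, for every \(\psi\in L_\infty(\mu)\) we have
\[
\int\psi\,\Phi(\varphi)\,d\mu=\int_S\varphi\,(T\psi|w|)\,d\mu ,
\]
and \(T\psi|w|\mathbf 1_S\in L_1(\mu|_S)\), so the right-hand side is a weak-star continuous function of \(\varphi\). Every functional on \(L_1(\mu)\) restricts, on functions supported in \(S\), to integration against some \(\psi\in L_\infty(\mu|_S)\), again by \(\sigma\)-finiteness. Hence testing against such \(\psi\) is enough to establish weak continuity. Therefore \(I\) is weakly compact, and so is its translate \(K\), which completes the argument.
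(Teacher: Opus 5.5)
Your proof is correct, and it takes a different route from the paper's in both the truncation and the compactness mechanism.

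The paper's route is as follows:
\begin{itemize}
\item It first invokes Lemma~\ref{lem:loc_2} to obtain a set $H$ with $\mu(H)<\infty$ on which $\nu\leq M\mu$.
\item It truncates each $h\in C-C$ at a constant height $R$ on $H$, bounding $\nu(H\cap\{|h|>R\})\leq M\|h\|_1/R$ by Chebyshev.
\item It gets weak compactness of the truncations from the Dunford--Pettis theorem on the finite measure space $(H,\mu|_H)$, and sets $K=c_0+\overline{K_0}^{\,w}$.
\end{itemize}

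Your route differs as follows:
\begin{itemize}
\item You truncate at the variable height $T|w|$. The estimate $\nu(\{|d|>T|w|\})\leq\|d\|_1/T$ then becomes a one-line weighted Chebyshev bound, and Lemma~\ref{lem:loc_2} is not needed.
\item Your $K$ is an explicit translated order interval $c_0+\{g:|g|\leq T|w|\}$.
\item You derive its weak compactness from Banach--Alaoglu in $L_\infty(\mu|_S)=L_1(\mu|_S)^*$ on the $\sigma$-finite set $S=\{w\neq0\}$. Your handling of an arbitrary, possibly non-$\sigma$-finite $\mu$, by restricting functionals to functions supported in $S$, is sound.
\end{itemize}

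Your approach gives a shorter, self-contained argument. It produces a convex $K$ that depends on $C$ only through $c_0$, so the convex-hull reduction at the start of Lemma~\ref{lem:approx-dentability} would be unnecessary. The paper's approach is more economical within the paper, since Lemma~\ref{lem:loc_2} is already needed for Lemma~\ref{lem:loc_4} and Dunford--Pettis on a finite measure space is a standard black box. Either way, the underlying fact is that sets dominated by a fixed integrable function are weakly compact in $L_1$.
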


\begin{proof}
	If \(C=\emptyset\), there is nothing to prove. Hence assume \(C\ne\emptyset\). Lemma \ref{lem:loc_2} provides a measurable set $H \subset \Omega$ with $\mu(H)<\infty$ and a constant $M\geq 1$ such that
	\[
	\nu(H^c)<\frac{\eta_0}{2} \qquad \text{and} \qquad \nu(A)\leq M\mu(A) \quad \forall \, A \subset H \text{ measurable}.
	\]
	Choose \(R>0\) such that
	\[
	\frac{2M}{R}<\frac{\eta_0}{2}.
	\]
	Let \(h\in C-C\). Define
	\[
	E_h=H^c\cup\bigl(H\cap\{|h|>R\}\bigr).
	\]
	Since \(h\in 2B_{L_1(\mu)}\), we have
	\[
	\begin{aligned}
		\nu\bigl(H\cap\{|h|>R\}\bigr)
		&
		\leq
		M\,\mu\bigl(H\cap\{|h|>R\}\bigr)
		\leq
		M\frac{\|h\|_1}{R}
		\leq
		\frac{2M}{R}
		<
		\frac{\eta_0}{2}.
	\end{aligned}
	\]
	Thus \(\nu(E_h)<\eta_0\), and by hypothesis
	\[
	\int_{E_h}|h|\,d\mu\leq r.
	\]
	Consequently, we can approximate $h$ by $h_R:=h\,\mathbf 1_{H\cap\{|h|\leq R\}}$. Indeed,
	\[
	\|h-h_R\|_1=\int_{E_h}|h|\,d\mu\leq r.
	\]
	Viewed as a subset of \(L_1(\mu|_H)\), the family
	\[
	K_0=\{h_R:h\in C-C\}
	\]
	is uniformly integrable, since \(\mu(H)<\infty\) and
	\[
	|h_R|\leq R\mathbf 1_H
	\quad \forall \, h\in C-C.
	\]
	By the Dunford--Pettis criterion, \(K_0\) is relatively weakly compact in
	\(L_1(\mu|_H)\). The extension-by-zero operator
	\[
	L_1(\mu|_H)\longrightarrow L_1(\mu)
	\]
	is bounded linear and therefore weak-to-weak continuous. Hence \(K_0\) is
	relatively weakly compact in \(L_1(\mu)\). Fix $c_0 \in C$. It follows that
	\[K=c_0+\overline{K_0}^{\,w}\] 
	is weakly compact in $L_1(\mu)$. Moreover, for any \(c\in C\), we have that $h=c-c_0 \in C-C$, and
	\[
	c=c_0+h=c_0+h_R+(h-h_R),
	\qquad
	\|h-h_R\|_1\leq r.
	\]
	Therefore
	\[
	C\subset K+rB_{L_1(\mu)}.\qedhere
	\]
\end{proof}

We can now prove the desired localization principle.

\begin{proposition}\label{prop:positive-localization}
	Let \(0<\delta\leq2\), let \(C\subset B_{L_1(\mu)}\) be wHD-\(\delta\), and
	let \(0<\alpha<\delta/4\). Then for every \(w\in L_1(\mu)\) and every
	\(\eta>0\), there exist \(y,z\in C\) and a measurable set
	\(E\subset\Omega\) such that
	\[
	\int_E |w|\,d\mu<\eta
	\qquad\text{and}\qquad
	\int_E |y-z|\,d\mu>\alpha.
	\]
\end{proposition}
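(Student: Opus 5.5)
I argue by contradiction, combining Lemma~\ref{lem:close-to-weakly-compact} with Lemma~\ref{lem:approx-dentability}. Suppose the conclusion fails for some \(w\in L_1(\mu)\) and \(\eta>0\). Define \(\nu(A)=\int_A|w|\,d\mu\). Failure means that
\[
\int_E|y-z|\,d\mu\leq\alpha
\]
for all \(y,z\in C\) and every measurable \(E\) with \(\nu(E)<\eta\). This is exactly the hypothesis of Lemma~\ref{lem:close-to-weakly-compact} with \(r=\alpha\) and \(\eta_0=\eta\).

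The first step applies Lemma~\ref{lem:close-to-weakly-compact}. It gives a weakly compact \(K\subset L_1(\mu)\) with
\[
C\subset K+\alpha B_{L_1(\mu)}.
\]
Note that \(C\) is non-empty, since wHD-\(\delta\) sets are non-empty by definition, and bounded, since \(C\subset B_{L_1(\mu)}\).

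The second step applies Lemma~\ref{lem:approx-dentability} with \(A=C\) and \(r=\alpha\). Take \(\varepsilon=\delta-4\alpha\), which is positive because \(\alpha<\delta/4\). This produces a weakly open \(W\) with \(C\cap W\neq\emptyset\) and
\[
\diam(C\cap W)<4\alpha+\varepsilon=\delta.
\]
This contradicts the assumption that \(C\) is wHD-\(\delta\), so the conclusion of the proposition must hold.

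There is one technical point to check: the negation of the statement must match the hypothesis of Lemma~\ref{lem:close-to-weakly-compact} exactly. The negation gives \(\int_E|y-z|\,d\mu\leq\alpha\) whenever \(\nu(E)<\eta\), which is the required form with \(r=\alpha\) and \(\eta_0=\eta\). Beyond this, all the substantive work (the De Blasi/double-limit estimate and the dentability index bound) is already contained in the two lemmas, so I expect no real obstacle. The only quantitative point is that the factor \(4\) in Lemma~\ref{lem:approx-dentability} is what forces the threshold \(\alpha<\delta/4\).

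Finally, combining this proposition with Lemma~\ref{lem:localization-to-density} immediately yields Proposition~\ref{prop:G-delta-dense}.
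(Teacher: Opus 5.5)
Your proposal is correct and matches the paper's proof essentially step for step. You negate the conclusion to get the hypothesis of Lemma~\ref{lem:close-to-weakly-compact} with \(r=\alpha\), obtain \(C\subset K+\alpha B_{L_1(\mu)}\), and then apply Lemma~\ref{lem:approx-dentability} with \(4\alpha+\varepsilon<\delta\) (your explicit choice \(\varepsilon=\delta-4\alpha\) works because the diameter bound is strict), which contradicts the wHD-\(\delta\) property.
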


\begin{proof}
	Suppose, by contradiction, that the conclusion fails for some
	\(w\in L_1(\mu)\) and some \(\eta_0>0\). Define $\nu(A)=\int_A |w|\, d\mu$. Then
	\[
	\int_E |y-z|\,d\mu\leq \alpha
	\]
	for all \(y,z\in C\) and every measurable set \(E\subset\Omega\) with $\nu(E)<\eta_0$.
	By Lemma~\ref{lem:close-to-weakly-compact}, there is a weakly compact set \(K\subset L_1(\mu)\) such that
	\[
	C\subset K+\alpha B_{L_1(\mu)}.
	\]
	Applying Lemma~\ref{lem:approx-dentability} with \(A=C\) and \(r=\alpha\),
	we obtain, for every \(\varepsilon>0\), a weakly open set
	\(W\subset L_1(\mu)\) such that \(C\cap W\ne\emptyset\) and
	\[
	\diam(C\cap W)<4\alpha+\varepsilon.
	\]
	Since \(\alpha<\delta/4\), we may choose \(\varepsilon>0\) so small that
	\[
	4\alpha+\varepsilon<\delta,
	\]
	which contradicts the fact that \(C\) is wHD-\(\delta\).
\end{proof}

Now that the desired $L_1$-localization has been established, Proposition~\ref{prop:G-delta-dense} follows from Lemma \ref{lem:localization-to-density}.

\begin{proof}[Proof of Proposition~\ref{prop:G-delta-dense}]
	Let \(0<\delta\leq2\), let \(C\subset B_X\) be wHD-\(\delta\), and let
	\(0<\alpha<\delta/4\). Proposition~\ref{prop:positive-localization} verifies
	the hypothesis of Lemma~\ref{lem:localization-to-density}. Therefore
	\(G(C,\alpha)\) is weak-star open and weak-star dense in \(B_{X^*}\).
\end{proof}

\section*{Acknowledgements}
The author thanks Gin\'es L\'opez-P\'erez for helpful comments and suggestions.

\bibliographystyle{plain}
\bibliography{bibliography}
\end{document}